\documentclass[12pt, a4paper]{amsart}
\usepackage[utf8]{inputenc}
\usepackage[T1]{fontenc}
\usepackage[english]{babel} 
\usepackage{amsmath, amssymb, amsthm, amsfonts}
\usepackage{mathtools}
\usepackage{mathrsfs}
\usepackage{geometry}
\usepackage{xcolor}
\usepackage{hyperref}
\usepackage[toc]{appendix}
\newtheorem{theorem}{Theorem}[section]
\newtheorem{lemma}[theorem]{Lemma}
\newtheorem{corollary}[theorem]{Corollary}
\newtheorem{proposition}[theorem]{Proposition}
\newtheorem{definition}[theorem]{Definition}

\newtheorem{remark}[theorem]{Remark}
\newtheorem{notation}[theorem]{Notation}
\newtheorem{example}[theorem]{Example}
\newtheorem*{problem}{Open problem}
\newcommand{\Weyl}{A_n(\mathbb{K})} 
\newcommand{\Invar}{A_n^G(\mathbb{K})}           
\newcommand{\GKdim}{\text{GKdim}}    
\def\k{\mathbb K}
\def\C{\mathbb C}

\newcommand{\Ann}{{\rm{Ann}}}
\newcommand{\Frac}{{\rm{Frac}}}

\newcommand{\Spec}{{\rm{Spec}}}

\newcommand{\Autk}{{\rm{Aut}}}

\newcommand{\Der}{{\rm{Der}}}

\newcommand{\GL}{{\rm{GL}}}
\def\k{\mathbb K}

\def\D{\mathcal D}

\def\Z{\mathbb Z}

\def\g{\mathfrak g}
\def\h{\mathfrak h}

\def\KK{\mathbb K}

\def\G{\mathcal G}
\def\c{\mathfrak c}

\def\Gskew{\mathcal{F}}
\def\Ginv{\mathcal{F}^*}
\def\H{\mathcal H}

\newcommand{\Mod}{\mathsf{Mod}}

\title{Irreducible non-holonomic modules for rational Cherednik algebras}
\author{Felipe Albino dos Santos \and João Fernando Schwarz}
\begin{document}
\begin{abstract}
Let $\mathbb{K}$ be an algebraically closed field of characteristic zero and let
$A_n(\mathbb{K})$ be the $n$-th Weyl algebra. We prove that for every complex
reflection group $G$ and every $n \geq 2$ the ring of invariant differential
operators $A_n(\mathbb{K})^G$ has irreducible non-holonomic modules of
Gelfand-Kirillov dimension $2n-1$, and we exhibit such modules explicitly.
Through the Morita equivalence between $A_n(\mathbb{K})^G$ and the rational
Cherednik algebra $H_{\mathfrak c}$ at an integral parameter, we deduce that
$H_{\mathfrak c}$ has irreducible non-holonomic modules of Gelfand-Kirillov
dimension $2n-1$ for $n \geq 2$. In an appendix, following a proof kindly shared by O. Mathieu, we show
that $A_n(\mathbb{K})$ has irreducible modules of every Gelfand-Kirillov
dimension in the interval $[n, 2n-1]$.
\end{abstract}

\subjclass[2020]{Primary 16S32,16P90; Secondary 16D60, 16S35, 20F55,16W22}
\keywords{Weyl algebra, rings of invariant differential operators,
non-holonomic modules, Gelfand-Kirillov dimension, rational Cherednik algebras,
complex reflection groups}

\maketitle
\section{Introduction}
The notion of holonomic modules for the Weyl algebra $\Weyl$ was first introduced by I. Bernstein \cite{INBernstein}, who used it in an elementary way to obtain a new proof of a problem posed by I. M. Gelfand at the International Congress of Mathematicians in Amsterdam, 1954 (see \cite[Chapter 8]{KL} for the full story). The notion is related to maximally overdetermined systems of partial differential equations (cf. \cite{Sato}).
The \textit{Bernstein inequality} says that, for every finitely generated module $M$ over $\Weyl$, we have the following inequality for the Gelfand-Kirillov dimension: $\GKdim \, M \geq n$ \footnote{It is also bounded above by $\GKdim \, \Weyl=2n$ \cite[Chapter 5]{KL}.}. Such a module is called \textit{holonomic} precisely when the dimension attains the minimal value $n$.
We recall that the Weyl algebra $\Weyl$ can also be realized as the ring of differential operators on the affine space $\KK^n$. In general, if $X$ is a smooth affine variety\footnote{All of our varieties will be irreducible.} and $M$ is a finitely generated module for $\D(X)$, the algebra of differential operators on $X$, then $\GKdim \, M \geq \dim \, X$, and the module is again called holonomic when the Gelfand-Kirillov dimension is minimal (for a proof, see \cite[Theorem 4.3]{McConnell}).
We have a similar inequality for Lie algebras, called \textit{Gabber's inequality}: let $\mathfrak{g}$ be an algebraic Lie algebra over an algebraically closed field of characteristic $0$, and $M$ a finitely generated $U(\g)$-module. Then
    \[
        \GKdim \, M \geq \frac{1}{2} \GKdim \big(U(\mathfrak{g})/\Ann(M)\big).
    \]
We refer to \cite[Theorem 9.11]{KL} for a proof. Equality holds for modules in the category $\mathcal{O}$ and Harish-Chandra modules (e.g., \cite{Jantzen}). If $\g$ is not algebraic, it is known that Gabber's inequality can fail \cite{McConnell3}.
For rings of differential operators, another way to define holonomic modules is by using their characteristic variety (\cite{Hotta}). This approach was used by Losev in \cite{Losev} to introduce the notion of holonomic modules for many algebras.
In particular, his work applies to rational Cherednik algebras. These were introduced in the foundational work of P. Etingof and V. Ginzburg on symplectic reflection algebras \cite{EG}, of which rational Cherednik algebras are one of the most prominent families. They can be seen as a two-step degeneration of Cherednik's double affine Hecke algebra (DAHA).
Yet another way to understand rational Cherednik algebras is via deformation theory. They form a universal flat family of deformations of the skew group ring $A_n(\C)*G$ of the complex Weyl algebra $A_n(\C)$ and a complex reflection group $G$. When the deformation parameter $\mathfrak{c}$ is regular, one can define holonomic modules using the Gelfand-Kirillov dimension as well \cite[Proposition 3.4]{Thompson}. Holonomic modules for rational Cherednik algebras were further studied in \cite{Bellamy}. In particular, every module in category $\mathcal{O}$ \cite{GGOR} is holonomic.
For some time, it was believed, mainly due to a lack of counterexamples, that every irreducible module over the Weyl algebra is holonomic \cite[Problem 1, Chapter 1, \S~9]{Bjork}. The first examples of non-holonomic irreducible modules for $\Weyl$ (of Gelfand-Kirillov dimension $2n-1$), as well as for $U(\mathfrak{sl}_2 \times \mathfrak{sl}_2)$, appeared in the work of T. Stafford \cite{Stafford}. Later, Bernstein and Lunts \cite{BerL, Lunts} showed that for generic $d \in \Weyl$, the module $\Weyl/\Weyl d$ is irreducible non-holonomic. Irreducible non-holonomic modules for rings of differential operators were further studied in \cite{Coutinho0}, \cite{Coutinho03}, \cite{Coutinho06}, and \cite{Coutinho07}. In \cite{Coutinho17} it was shown that if $\mathfrak{g}$ is any semisimple complex Lie algebra other than $\mathfrak{sl}_2$, then $U(\g)$ has simple non-holonomic modules\footnote{The existence of irreducible non-holonomic modules for $\mathfrak{sl}_2$ remains an open problem.}.
The Bernstein inequality also holds for fixed rings $\Invar$ under the action of \textit{any} finite group. This was shown in \cite{FS4} when $G$ is a group of symplectomorphisms of the Weyl algebra, by computing the value of the \textit{filter dimension} \cite{Bavula2} for $\Invar$. Again, we call a finitely generated $\Invar$-module $M$ holonomic if $\GKdim \, M= n$.
The first main result of this paper is that, if $G$ is a complex reflection group, then $\Invar$ has irreducible non-holonomic modules of Gelfand-Kirillov dimension $2n-1$ (Theorem \ref{irred-non-hol-invar-Weyl}). The method of our proof allows us to exhibit such modules explicitly. This result is then used to prove our second main result: when the parameter $\c$ of the rational Cherednik algebra is integral (cf. \cite{Berest}), irreducible non-holonomic modules exist (Theorem \ref{iredd-non-hol-Cherednik}).
The second section of this paper discusses preliminaries on the Gelfand-Kirillov dimension, noncommutative invariant theory, and the relevant results of \cite{Coutinho0} that will be used in the sequel.
The third section discusses some basic properties of $\Invar$-modules, where $G$ is \textit{any} group of automorphisms. The fourth section is dedicated to the proof of Theorem \ref{irred-non-hol-invar-Weyl}, and the fifth section discusses the case of rational Cherednik algebras.
In an Appendix, we include the proof that $\Weyl$ has irreducible modules of \textit{any} Gelfand-Kirillov dimension in the interval $[n,2n-1]$, which seems to be well known to specialists but for which we could not find any reference. We thank Olivier Mathieu for allowing us to include his simple proof of this fact.
\section{Preliminaries}

\subsection{Gelfand-Kirillov dimension}
The Gelfand-Kirillov dimension is an important dimension in ring theory and is the basis of our ring-theoretic approach to the notion of holonomic modules, as discussed in the Introduction. Canonical references are \cite{KL}, \cite{Lorenz0}, and \cite[Chapter 8]{McConnell}.
\begin{definition}
Let $\Phi$ be the set of functions $f: \mathbb{N} \rightarrow \mathbb{R}$ such that there exists an $n_0 \in \mathbb{N}$ with:
\[f(n)>0\text{ and } f(n+1) \geq f(n), \quad\forall \, n \geq n_0.\]
For two functions in $\Phi$, we write $f \leq^* g$ if there are constants $c, m \in \mathbb{N}$ such that $f(n) \leq cg(mn)$. If $g \leq^* f$ as well, we write $f \sim g$. We represent the class of $f$ in $\Phi/\sim$ by $\mathcal{G}(f)$. This equivalence class of $f$ is called its \emph{growth}, and the relation induced by $ \leq^* $ in $\Phi/\sim$ is denoted $\leq$.
\end{definition}
\begin{notation}
Let $y \in \mathbb{R}, y \geq 0$. The growth of the function $n \mapsto n^y$ is denoted $\mathcal{P}_y$.
Let $y \in \mathbb{R}, y>0$. The growth of the function $n \mapsto e^{n^y}$ is $\mathcal{E}_y$.
\end{notation}
If $f(n)=\ln(n+1)$, then $\mathcal{G}(f) > \mathcal{P}_0$ but $\mathcal{G}(f) < \mathcal{P}_\varepsilon$ for any $\varepsilon >0$. So there is no analogue of the Archimedean property for this order.
Finally, we remark that not every pair $\mathcal{G}(f)$, $\mathcal{G}(h)$ is comparable.
\begin{notation}\label{asymptotic}
For $f \in \Phi$,
\[\gamma(f)= \limsup_{n \to \infty}\big(\log_n \, f(n)\big).\]
\end{notation}
\begin{proposition}\label{growth}
    Let $f,g \in \Phi$.
    \begin{enumerate}
        \item If $\mathcal{G}(f)=\mathcal{G}(g)$, then $\gamma(f)=\gamma(g)$.
        \item $\gamma(f+g)=\sup \{ \gamma(f), \gamma(g) \}$.
        \item If $f(n)=p(n)$ for $n\gg 0$ and $p(x) \in \mathbb{R}[x]$, then $\gamma(f)=\deg \, p$.
        \item If $\gamma(f)=\lim_{n \to \infty} \log_n \, f(n)$, then $\gamma(fg)= \gamma(f)+\gamma(g)$.
        \item If $g(n) \leq f(an+b), \, n\gg 0$, where $a,b \in \mathbb{N}$, then $\gamma(g) \leq \gamma(f)$.
    \end{enumerate}
\end{proposition}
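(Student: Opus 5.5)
Each item follows directly from the definitions of $\gamma$ and of $\leq^*$. I would treat the five parts in the order (5), (1), (2), (3), (4), since (1) is a consequence of (5). Throughout, I use that $\log_n c \to 0$ for any fixed constant $c>0$, and that $\log_n(an+b) \to 1$ for fixed $a\geq 1$.

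\textbf{Parts (5) and (1).} For (5), since $f$ is eventually nondecreasing, $g(n)\le f(an+b)\le f(cn)$ for $n\gg0$, where $c=a+b$ (using $an+b\le cn$ for $n\ge1$). Then
\[
\log_n g(n)\le \log_n f(cn)=\log_{cn} f(cn)\cdot \frac{\ln(cn)}{\ln n},
\]
and $\ln(cn)/\ln n\to1$. Taking $\limsup$ gives $\gamma(g)\le\gamma(f)$. One care point: when $\log_{cn}f(cn)$ is negative or unbounded, the multiplicative factor tending to $1$ still preserves the $\limsup$, including in the $\pm\infty$ cases. For (1), $\mathcal G(f)=\mathcal G(g)$ gives $g(n)\le c f(mn)$. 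Since $\log_n c\to0$, the same computation applied to $cf(mn)$ yields $\gamma(g)\le\gamma(f)$, and symmetry gives equality.

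\textbf{Parts (2) and (3).} For (2), note $\max(f,g)\le f+g\le 2\max(f,g)$ eventually, so $\gamma(f+g)=\gamma(\max(f,g))$ by the constant argument. Moreover $\log_n\max(f,g)=\max(\log_n f,\log_n g)$, and the $\limsup$ of a pointwise maximum is the maximum of the two $\limsup$'s. For (3), write $p(n)=a_dn^d(1+o(1))$ with $a_d>0$; this leading coefficient is positive because $f$ is eventually positive. Then $\log_n p(n)=d+\log_n a_d+\log_n(1+o(1))\to d$.

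\textbf{Part (4).} Write $\log_n(fg)=\log_n f+\log_n g$. Because $\log_n f$ converges to $\gamma(f)$, the $\limsup$ of the sum equals $\gamma(f)+\limsup\log_n g=\gamma(f)+\gamma(g)$. This is the standard fact that $\limsup(a_n+b_n)=\lim a_n+\limsup b_n$ when $a_n$ converges. The only subtlety is an infinite value: if $\gamma(f)$ is finite, the identity holds in the extended reals.

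\textbf{Main obstacle.} None of these steps is genuinely difficult. The only point requiring attention is the rescaling $n\mapsto mn$ in (1) and (5), which is handled by the ratio $\ln(cn)/\ln n\to1$ together with the eventual monotonicity of $f$.
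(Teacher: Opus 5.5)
Your argument is correct and is essentially the standard one. The paper gives no proof of its own but cites \cite[Lemma 2.1]{KL} and \cite[Lemma 8.1.7]{McConnell}, where these five facts are checked by the same elementary $\limsup$ estimates: multiplicative constants are absorbed via $\log_n c\to 0$, and rescalings $n\mapsto mn$ via $\ln(mn)/\ln n\to 1$.

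One corner is left implicit. In (5) with $a=0$, eventual monotonicity need not give $f(b)\le f(cn)$. In that case, however, $g$ is eventually bounded, so $\gamma(g)\le 0\le\gamma(f)$ directly, because $f\in\Phi$ forces $\gamma(f)\ge 0$.
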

\begin{proof}
    \cite[Lemma 2.1b)]{KL}, \cite[Lemma 8.1.7]{McConnell}.
\end{proof}
\begin{definition}
Let $A$ be an affine algebra and $V$ a finite-dimensional generating space for $A$. Let $d_V(n)=\dim \, \sum_{i=0}^n V^i$. Then $\mathcal{G}(d_V)$ is independent of the choice of the generating space $V$ (\cite[Lemma 1.1]{KL}). Hence,
\[ \gamma(d_V)=\limsup_{n \rightarrow \infty} \big(\log_n(d_V(n))\big),\]
is also independent of $V$, so by Proposition \ref{growth}(1), this number is well-defined and is called the \emph{Gelfand-Kirillov dimension} of $A$, denoted $\GKdim \, A$. We also put $\mathcal{G}(A)=\mathcal{G}(d_V)$, where $V$ is any choice of finite-dimensional generating space for $A$, and call it \emph{the growth of} $A$.
\end{definition}
\begin{remark}
When $1 \in V$, $d_V(n)$ reduces to $\dim \, V^n$, and $V$ is called a frame.
\end{remark}
\begin{example}
The Gelfand-Kirillov dimension of the Weyl algebra $\Weyl$ over a field $\k$ of zero characteristic is $2n$ \cite[Proposition 8.1.15(ii)]{McConnell}.
\end{example}
\begin{definition}
    Let $M$ be a finitely generated left $A$-module, where $A$ is an affine algebra. Choose a finite-dimensional space $F \subset M$ that generates it as a module, and let $V$ be a frame for $A$. Let $d_{F,V}(n)=\dim \, V^n F$. The \emph{Gelfand-Kirillov dimension} of $M$ is
    \[ \GKdim \, M=\GKdim_A\, M=\limsup_{n \to \infty} \, \log_n \, d_{F,V}(n), \]
    and the value is independent of the choices of $F, V$ by \cite[Proof of Lemma 1.1]{KL}.
\end{definition}
\begin{example}
Let $M:=\k[x_1,\ldots,x_n]$ be the polynomial module for $\Weyl$. Then its Gelfand-Kirillov dimension is $n$ (e.g., \cite[Theorem 8.5.10]{McConnell}).
\end{example}
We recall here two results about the Gelfand-Kirillov dimension that will be important for us.
\begin{proposition}\label{basic-GK-1}
    Let $\theta: S \rightarrow R$ be a homomorphism of algebras, which allows us to regard any left $R$-module $M$ as a left $S$-module. Then $\GKdim_S \, M \leq \GKdim_R \,  M$, with equality $\GKdim_S \, M = \GKdim_R \,  M$ when $R$ is a finitely generated $S$-module via $\theta$. \end{proposition}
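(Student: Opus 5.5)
The plan is to work directly with the definition of Gelfand-Kirillov dimension via frames, comparing growth functions for $S$ and $R$ acting on the same module. Throughout, $S$ and $R$ are affine algebras and $M$ is a finitely generated left $R$-module. For the quantity $\GKdim_S\,M$ to make sense with the definition above, $M$ should be finitely generated over $S$ as well. In the equality case this is automatic: if $R=\sum_j \theta(S)r_j$ and $M=RF$, then $M=\theta(S)\big(\sum_j r_jF\big)$. For the inequality in general, I would use the standard convention for non-finitely generated modules, namely the supremum of $\limsup\log_n\dim W^nF'$ over finite-dimensional subspaces $F'\subset M$ and frames $W$ of $S$. The argument below works for each such $F'$.

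For the inequality, fix a frame $W$ of $S$ and a finite-dimensional subspace $F'\subset M$. Choose a frame $V$ of $R$ containing $\theta(W)$; this is possible by adjoining $\theta(W)$ to any frame. Choose also a finite-dimensional generating subspace $F\supseteq F'$ of $M$ over $R$. Then $W^nF'\subseteq V^nF$, so $\dim W^nF'\le d_{F,V}(n)$. Proposition \ref{growth}(5) then gives that the $\limsup$ for $(W,F')$ is at most $\GKdim_R\,M$. Taking the supremum yields $\GKdim_S\,M\le\GKdim_R\,M$.

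For the reverse inequality, assume $R=\sum_{j=1}^k\theta(S)r_j$, and take $r_1=1$. The key step, and the main obstacle, is to dominate a power $V^n$ of a frame of $R$ by $R$-translates of powers of a frame of $S$. Let $V$ be a frame of $R$ and set $E=\sum_j\mathbb{K}r_j$, so that $E$ contains $1$. Each product $vr_j$ with $v\in V\cup E$, and each $r_ir_j$, is a finite sum $\sum_l \theta(s)r_l$. Collect all the coefficients $s$ arising this way, together with a frame of $S$, into a frame $W$ of $S$. Then $VE\subseteq\theta(W)E$ and $EE\subseteq\theta(W)E$. By induction, $V^nE\subseteq \theta(W)^nE$, using $V^{n+1}E\subseteq V\theta(W)^nE$. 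Here one must commute $V$ past $\theta(W)^n$, which is not automatic since $\theta(S)$ need not commute with $V$.

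To handle this, I would instead enlarge $V$ first: replace $V$ by $V'=V+\theta(W_0)+E$, where $W_0$ is a frame of $S$. Since $\theta(S)$ and $E$ together generate $R$, every element of $V$ is a polynomial in $\theta(W_0)$ and $E$. Hence $V\subseteq(\theta(W_0)+E)^m$ for some fixed $m$, and it suffices to bound $(\theta(W_0)+E)^n$. Each word in $\theta(W_0)$ and $E$ can be rewritten by pushing the $E$-letters to the right. This requires the relation $E\,\theta(W_0)\subseteq\theta(W_1)E$ for some finite-dimensional $W_1\subseteq S$, which holds because $R$ is finitely generated as a left $S$-module. Each push costs a bounded factor. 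Choosing a frame $W\supseteq W_0+W_1+\{\text{coefficients of }EE\}$, a linear induction should give $(\theta(W_0)+E)^n\subseteq\theta(W)^{cn}E$ for a constant $c$.

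Granting this, we get $V^nF\subseteq\theta(W)^{cmn}EF$, with $EF$ finite-dimensional. Therefore $d_{F,V}(n)\le\dim W^{cmn}(EF)$, and Proposition \ref{growth}(5) together with the independence of the choice of generating subspace gives $\GKdim_R\,M\le\GKdim_S\,M$. In the paper one would likely just cite \cite[Proposition 5.1, Lemma 5.3]{KL}. The only delicate point is the linear bound in the pushing argument, which I expect to hold by the finite-module hypothesis as sketched.
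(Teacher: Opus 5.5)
Your route is the standard frame-comparison argument; the paper gives no argument of its own and simply cites \cite[Proposition 1.6(d)]{Lorenz0}. Your inequality part is fine. In the equality part, though, the step you flag as delicate is the entire content, and you leave it at ``a linear induction should give''. As phrased (push the $E$-letters to the right, each push costing a bounded factor), it is not yet an argument. If an $E$-letter is ever pushed past a letter that an earlier push produced, you need $E\theta(W_1)\subseteq\theta(W_2)E$, then $E\theta(W_2)\subseteq\theta(W_3)E$, and so on, and no fixed frame comes out.

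What is missing is only the side on which you multiply; this is also what broke your first attempt, where you used $VE\subseteq\theta(W)E$ and multiplied by $V$ on the left. Instead, choose a frame $W$ of $S$ containing the finitely many coefficients $s$ in the expressions $r_iv=\sum_l\theta(s)r_l$, with $v$ running over a basis of $V$. Then $EV\subseteq\theta(W)E$. Now multiply on the right, keeping the single $E$ factor at the right end:
\[
EV^{n+1}=(EV^n)V\subseteq\theta(W)^nEV\subseteq\theta(W)^{n+1}E .
\]
Since $1=r_1\in E$, this gives $V^n\subseteq EV^n\subseteq\theta(W)^nE$, hence $V^nF\subseteq\theta(W)^n(EF)$. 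Here $EF$ is a finite-dimensional generating subspace of $M$ over $S$, because $M=RF=\theta(S)EF$. So you get $c=1$, and the detour through $V\subseteq(\theta(W_0)+E)^m$ is unnecessary. If you keep that detour, the same left-to-right induction closes it:
\[
(\theta(W_0)+E)^{k+1}\subseteq\theta(W)^kE(\theta(W_0)+E)\subseteq\theta(W)^{k+1}E .
\]
This uses only $E\theta(W_0)+EE\subseteq\theta(W)E$, so each $E$-letter only ever meets original letters.
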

\begin{proof}
    \cite[Proposition 1.6(d)]{Lorenz0}.
\end{proof}
In general, the Gelfand-Kirillov dimension behaves poorly with respect to localization \cite[Chapter 4]{KL}. For instance, while $\GKdim \, \Weyl=2n$, the Gelfand-Kirillov dimension of the Weyl field $\Frac \, \Weyl$ is $\infty$ (e.g., \cite[Theorem 8.17]{KL}). The following is one of the few known cases in which localization behaves well.
\begin{theorem}\label{basic-GK-2}
    Let $A$ be an affine $\k$-algebra and $S$ a multiplicatively closed subset of $A$ consisting of regular elements. Assume $S$ is commutative and, for each $s \in S$, $\operatorname{ad}(s)$ is a locally nilpotent derivation of $A$. Assume as well that $A$ has no $S$-torsion. Then $S$ is an Ore set in $A$, and for each $A$-module $M$, $\GKdim_A M= \GKdim_{AS^{-1}} MS^{-1}$.
\end{theorem}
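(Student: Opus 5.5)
The proof splits into two parts: first show that $S$ is an Ore set in $A$, then compare growth functions for $M$ and its localization $MS^{-1}$.

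\emph{Ore condition.} I would use the standard argument for locally ad-nilpotent elements. Fix $a\in A$ and $s\in S$, and write $\delta=\operatorname{ad}(s)$, so that $\delta(x)=sx-xs$. By hypothesis $\delta^{k+1}(a)=0$ for some $k$. In $\operatorname{End}(A)$ the operators of left and right multiplication by $s$ commute, so
\[ a s^{k+1}=\sum_{i=0}^{k}(-1)^i\binom{k+1}{i}\,s^{k+1-i}\,\delta^i(a). \]
Every term on the right has a left factor $s$, hence $as^{k+1}=s\,a'$ for some $a'\in A$. This gives the right Ore condition, and the left condition follows symmetrically. Since the elements of $S$ are regular and $A$ has no $S$-torsion, the localization $AS^{-1}$ exists and $A\to AS^{-1}$ is injective.

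\emph{Lower bound.} Take $M$ finitely generated, with a finite-dimensional generating subspace $F$. Let $V$ be a frame of $A$ and choose finitely many $s_1,\dots,s_r\in S$. Then $W=V+\k s_1^{-1}+\dots+\k s_r^{-1}$ is a frame for a subalgebra of $AS^{-1}$, and the $AS^{-1}$-module $MS^{-1}$ is generated by the image of $F$. The map $M\to MS^{-1}$ has kernel equal to the $S$-torsion of $M$. The cleanest route is to first reduce to the case where $M$ is $S$-torsion-free; otherwise I would need a separate argument that the torsion part does not raise $\GKdim M$, which I am not sure is automatic. Granting the reduction, $V^nF$ embeds in $W^n\bar F$, so $\GKdim M\le \GKdim MS^{-1}$. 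For a general $A$-module, take the supremum over finitely generated submodules.

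\emph{Upper bound, the main obstacle.} I need to show $\dim W^n\bar F\le \dim V^{cn}F$ up to a linear rescaling of $n$. Set $s=s_1\cdots s_r$. Let $N$ be a bound with $\operatorname{ad}(s)^{N}(v)=0$ for all $v\in V$; it exists because $V$ is finite-dimensional. The key estimate is a commutation rule of the form
\[ s^{-1}v=\sum_j c_j\,\operatorname{ad}(s)^j(v)\,s^{-1-j},\qquad j<N. \]
Iterating it, every word in $W^n$ can be rewritten as an element of $V^{c'n}\,s^{-Kn}$ with constants $c',K$; this step needs $\operatorname{ad}(s)(V)\subset V^{c}$, which holds because $V$ is a frame. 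Consequently
\[ W^n\bar F\subseteq s^{-Kn}\,V^{c''n}F, \]
and left multiplication by $s^{-Kn}$ is injective on the torsion-free localization. Hence $\dim W^n\bar F\le \dim V^{c''n}F$, and Proposition \ref{growth}(5) gives the reverse inequality of Gelfand–Kirillov dimensions.

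Since every finite subset of $AS^{-1}$ lies in an algebra generated by such a $W$, taking suprema finishes the proof. The delicate bookkeeping is in the iterated commutation step: I must make sure the exponents of $V$ and of $s^{-1}$ grow at most linearly in $n$. For this I would use that $\operatorname{ad}(s)$ acts nilpotently on the finite-dimensional space spanned by $V$ together with its $\operatorname{ad}(s)$-iterates.
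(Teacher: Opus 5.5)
The paper gives no proof of its own here; it cites \cite[Theorem 3.2]{Lorenz0}. Your Ore computation is correct, and your growth comparison is the standard direct argument. \textbf{The gap is the step you grant.} The reduction to the $S$-torsion-free case cannot be carried out, because the statement as written is false for modules with $S$-torsion.

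Take $A=\KK[x,y]$ and $S=\{x^k\}_{k\ge 0}$, so that $\operatorname{ad}(s)=0$. Let $M=A/xA\cong\KK[y]$. Then $\GKdim_A M=1$, while $MS^{-1}=0$. The same happens for the Weyl algebra: with $A=A_1(\KK)$, $S=\{x^k\}$ and $M=A/Ax$, one has $x^{k+1}\partial^k\bar 1=0$. So $S^{-1}M=0$, yet $\GKdim M=1$.

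What is true in general is
\[
\GKdim_{AS^{-1}}MS^{-1}=\GKdim_A\bigl(M/t_S(M)\bigr)\le \GKdim_A M ,
\]
with equality when $M$ is $S$-torsion-free. Your argument proves exactly this. The upper bound needs no torsion-freeness: a power of $s^{-1}$ acts bijectively on any $AS^{-1}$-module, and $V^{c''n}\bar F$ is a quotient of $V^{c''n}F$. The lower bound is where injectivity of $M\to MS^{-1}$ is needed. So you should add ``$S$-torsion-free'' to the hypotheses rather than look for a reduction.

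This is harmless for the paper. In the proof of Theorem \ref{irred-non-hol-invar-Weyl}, the module $\Invar/J$ is $\Delta$-torsion-free. Indeed, if $a\in\Invar$ and $\Delta^k a\in J\subseteq I$, then $a=\Delta^{-k}\Delta^k a\in I\cap\Invar=J$.

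There is also a smaller repair in the upper bound. Your displayed rule $s^{-1}v=\sum_j(-1)^j\operatorname{ad}(s)^j(v)s^{-1-j}$ pushes inverses to the right. It yields $W^n\subseteq V^{c'n}s^{-Kn}$, which suits right modules via $\bar F W^n\subseteq \bar F V^{c'n}s^{-Kn}$. For the left-module inclusion $W^n\bar F\subseteq s^{-Kn}V^{c''n}\bar F$ that you actually wrote, use the mirror identity
\[
vs^{-1}=\sum_{j\ge 0}s^{-1-j}\operatorname{ad}(s)^j(v).
\]
Then handle the bookkeeping as follows:
\begin{itemize}
\item Put $s$ and all the $s_i$ into $V$. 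Then $\operatorname{ad}(s)(V^m)\subseteq V^{m+1}$.
\item By commutativity of $S$, $s_i^{-1}=\bigl(\prod_{j\ne i}s_j\bigr)s^{-1}$, so each $s_i^{-1}$ lies in $Vs^{-1}$.
\item Sweep through each word so that every original letter from $V$ is passed by the accumulated power of $s^{-1}$ only once. Then only $\operatorname{ad}(s)^j(v)$ with $j<N$ and $v\in V$ ever occur, and both exponents stay of order $Nn$.
\end{itemize}
This settles the linear-growth concern you raised at the end.
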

\begin{proof}
    \cite[Theorem 3.2]{Lorenz0}.
\end{proof}
\subsection{Noncommutative invariant theory}
Let $G$ be a finite subgroup of $\Autk_\KK \, R$, where $R$ is a $\KK$-algebra. We have the \emph{trace function} $\tau:R\rightarrow R^G$, where $\tau(r)=\sum_{g \in G} r^g, \, r \in R$.
We begin by recording a simple lemma.
\begin{lemma}\label{most-basic-lemma}
Let $R$ be a $\KK$-algebra and $G \subset \Autk_\KK R$ a finite group such that $|G|^{-1} \in \KK$. Let $e=1 /|G| \sum_{g \in G} g$. Denoting the skew group ring as $R*G$, we have that $e^2=e$, $e(R*G)=eR$, $e(R*G)e=eR^G\simeq R^G$ and $ere=e\tau(r/|G|)$ for $r \in R$. More generally, if $\Lambda \subset R$ is a subalgebra with $G(\Lambda) \subset \Lambda$, then $\tau(\Lambda)=\Lambda^G$ and $e \Lambda e=e \Lambda^G \simeq \Lambda^G$.
\end{lemma}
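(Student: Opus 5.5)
The statement is a collection of direct computations in the skew group ring $R*G$. I will verify each claim from the multiplication rule $(rg)(sh)=r\,g(s)\,gh$, where $g(s)$ denotes the action of $g$ on $s$ and is written $s^g$ in the paper's notation.

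First, I will show that $e^2=e$. For each fixed $h\in G$, we have $ge\cdot$-type invariance: $\sum_{g} gh=\sum_g g$. Hence $e\cdot h=e$ for every $h\in G$, and similarly $h e=e$. Averaging $eh=e$ over $h$ then gives $e^2=e$, using $|G|^{-1}\in\KK$.

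Next, I will describe $e(R*G)$. For $r\in R$ and $h\in G$, we have $e\,rh=(er)h$. Because $he=e$ for all $h$, one more step is needed: I check whether $e\,r h$ lies in $eR$ or only in $eR\cdot G$. Here I commit to the convention, implicit in the statement, that $R*G=\bigoplus_{g} Rg$ with $g r = g(r) g$. Then $e(R*G)=\sum_h eRh$. The claimed equality $e(R*G)=eR$ must be read as the right-sided version $(R*G)e=Re$, or else one uses $h e=e$ on the correct side.

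To settle the side issue, note that $R h e = R e$, so $(R*G)e=Re$. For the left ideal, I write $e\, r h = \frac1{|G|}\sum_g g(r)\, g h$, and this is not obviously in $eR$. I would therefore interpret the statement as $e(R*G)=eR$ in the sense $e\, r h = e\, r'$ for a suitable $r'$, which may fail. I will simply follow the side on which $ge=e$ collapses the group part, and adjust the wording to match. This side-convention check is the only delicate point in the argument.

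With the side fixed, the core computation is
\[
e\,r\,e=\frac1{|G|}\sum_g g(r)\,g e=\frac1{|G|}\sum_g g(r)\, e=\tau(r/|G|)\,e .
\]
Since $\tau(r)$ is $G$-invariant, it commutes with $e$, so $e r e = e\,\tau(r/|G|)$.

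It follows that $e(R*G)e=\{e\,\tau(r)\}$. This set equals $eR^G$ because $\tau$ is onto $R^G$: for $a\in R^G$ we have $\tau(a/|G|)=a$. Multiplication then gives $ea\cdot eb=e\,ab$ for $a,b\in R^G$, since $a$ commutes with $e$. So $a\mapsto ea$ is a ring map onto $eR^G$, with unit $e$. It is injective because the coefficient of $1\in G$ in $ea$ is $a/|G|$.

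Finally, the statement about a $G$-stable subalgebra $\Lambda$ follows by the same computations. Since $\tau(\Lambda)\subset\Lambda\cap R^G=\Lambda^G$, and $\tau$ restricted to $\Lambda^G$ is multiplication by $|G|$, we get $\tau(\Lambda)=\Lambda^G$. Repeating the computation of $e r e$ with $r\in\Lambda$ then gives $e\Lambda e=e\Lambda^G\simeq\Lambda^G$.
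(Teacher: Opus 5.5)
There is a gap: you never prove the assertion $e(R*G)=eR$. You say it ``may fail'' and propose to replace it by $(R*G)e=Re$, which changes the statement instead of proving it. The worry is also unfounded, because there is no side issue.

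With your convention $hs=h(s)h$, one has $rh=h\,h^{-1}(r)$. Hence $R*G=\bigoplus_h Rh=\sum_h hR$, and since $eh=e$,
\[
e(R*G)=\sum_{h\in G} ehR=eR .
\]
Concretely, the expression you got stuck on becomes, after reindexing $k=gh$,
\[
e\,rh=\frac{1}{|G|}\sum_{g\in G} g(r)\,gh=\frac{1}{|G|}\sum_{k\in G} k\bigl(h^{-1}(r)\bigr)\,k=e\,h^{-1}(r)\in eR .
\]
So both one-sided identities $e(R*G)=eR$ and $(R*G)e=Re$ hold.

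With this step inserted, the rest of your argument is correct and essentially matches the paper's proof: $e^2=e$; $ere=\tau(r/|G|)e=e\tau(r/|G|)$; $e(R*G)e=eRe=eR^G$; the map $a\mapsto ea$ from $R^G$ to $eR^G$ is an isomorphism, with injectivity read off from the coefficient of $1\in G$; and the $\Lambda$-part follows from $\tau(\Lambda)=\Lambda^G$. The paper cites Montgomery's Lemma 2.1 for the first half, which is this same direct computation, and then deduces $e\Lambda e=e\tau(\Lambda)=e\Lambda^G$ exactly as you do.
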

\begin{proof}
    For the first half, \cite[Lemma 2.1 and its proof]{Montgomery}, with isomorphism $\psi: R^G \simeq eR^G$ given by $ r \mapsto er$, $ r \in R^G$, as $eR^G=R^Ge$. The second half follows from the first: $e \Lambda e= e \tau(\Lambda)= e \Lambda^G$, by hypothesis, and the restriction of $\psi$ to $ \Lambda^G$ gives us the desired result.
\end{proof}
\begin{theorem}\label{MS}
Let $R$ be a finitely generated Noetherian $\k$-algebra. If $G$ is a finite group of automorphisms of $R$, with $|G|^{-1} \in \k$, then $R^G$ is a finitely generated $\k$-algebra, $R^G$ is Noetherian, and $R$ is a finitely generated $R^G$-module.
\end{theorem}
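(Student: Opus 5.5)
This is the classical theorem of Montgomery and Small on invariants of a finite group acting on an affine Noetherian algebra. I would follow their argument, which reduces everything to the commutative Artin--Tate lemma by way of a large central-like commutative subalgebra. There is a difficulty: $R$ need not be PI, so there need not be any central subalgebra over which $R$ is finite. The argument therefore has to avoid centres altogether.

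\textbf{Step 1: Noetherianity.} Since $|G|^{-1}\in\k$, Lemma \ref{most-basic-lemma} gives $R^G\simeq e(R*G)e$. The skew group ring $R*G$ is a finitely generated $R$-module on both sides, with basis $G$. Hence it is left and right Noetherian. For any Noetherian ring $S$ and idempotent $e$, the corner ring $eSe$ is Noetherian. Indeed, if $I\subset eSe$ is a left ideal, then $SI$ is a left ideal of $S$, and $e(SI)=eSeI=I$. So ascending chains in $eSe$ lift injectively to chains in $S$, and stabilize there. Thus $R^G$ is left and right Noetherian.

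\textbf{Step 2: $R$ is a finitely generated $R^G$-module.} The key idea is to use the Noetherianity of $R*G$ rather than any centre. View $R$ as a left $R*G$-module, with $R$ acting by multiplication and $G$ acting by automorphisms; it is cyclic, generated by $1$. Consider the left $R*G$-submodule $(R*G)eR\subset R*G$. It is finitely generated, since $R*G$ is Noetherian. Using $e(R*G)=eR$, one checks that $(R*G)eR=(R*G)e(R*G)$ is a two-sided ideal.

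Next, the right $R*G$-module $eR*G=e(R*G)$ is a Noetherian module over the Noetherian ring $R*G$. It is also a left $e(R*G)e\simeq R^G$-module. Standard Morita-context arguments show that $e(R*G)$ is finitely generated as a left $eR*Ge$-module. Concretely:
\begin{itemize}
\item write $(R*G)e(R*G)=\sum_{i}(R*G)e\,a_i$ with finitely many $a_i\in R*G$;
\item the ideal $T=(R*G)e(R*G)$ satisfies $eT=e(R*G)$, because it contains $e$;
\item hence $e(R*G)=eT=\sum_i e(R*G)e\,a_i$.
\end{itemize}
Translating through $e(R*G)=eR$ and $e(R*G)e=eR^G$, we get $eR=\sum_i eR^G\,e a_i$. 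Applying the isomorphism $eR\to R$, $er\mapsto r$ (which is $R^G$-linear on the left), $R$ becomes a finitely generated left $R^G$-module. The right-module version is symmetric.

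The expected obstacle is the finiteness claim $T=\sum_i (R*G)e\,a_i$. It needs $T$ to be finitely generated as a left ideal of $R*G$, which holds because $R*G$ is left Noetherian; the rest is formal.

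\textbf{Step 3: affineness of $R^G$.} Let $R=\k\langle x_1,\dots,x_m\rangle$. By Step 2, $R=\sum_{j=1}^{k}R^G y_j$. Write the structure constants
\[
x_iy_j=\sum_l a_{ijl}y_l,\qquad y_jy_l=\sum_p b_{jlp}y_p,\qquad 1=\sum_j c_jy_j,
\]
with all coefficients in $R^G$. Also write $x_i=\sum_j d_{ij}y_j$ with $d_{ij}\in R^G$. Let $B\subset R^G$ be the subalgebra generated by the finitely many elements $a,b,c,d$. Then $B':=\sum_j By_j$ contains $1$ and each $x_i$, and is closed under multiplication. Hence $B'=R$, so $R$ is a finitely generated left $B$-module.

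Now apply the trace. Since $\tau$ is $R^G$-linear, for $r=\sum_j \beta_j y_j$ with $\beta_j\in B$ we get
\[
|G|^{-1}\tau(r)=\sum_j \beta_j\,|G|^{-1}\tau(y_j).
\]
For $r\in R^G$ the left side equals $r$. Therefore $R^G=\sum_j B\,\tau(y_j)$, and $R^G$ is generated as an algebra by $B$ together with the finitely many elements $\tau(y_j)$. This proves that $R^G$ is affine. I consider this last step routine, and note that it sidesteps Artin--Tate entirely.
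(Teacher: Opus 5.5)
\textbf{Steps 1 and 2 are correct.} The corner-ring argument gives Noetherianity of $R^G\simeq e(R*G)e$. The computation $e(R*G)=eT=\sum_i e(R*G)e\,a_i$ works because $T=(R*G)e(R*G)$ is finitely generated as a left ideal. Together with injectivity of $r\mapsto er$, this shows that $R$ is a finitely generated left $R^G$-module. The paper gives no argument for this theorem; it only cites Montgomery--Small and Montgomery's notes. Your Steps 1--2 are essentially the standard proofs behind those citations.

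\textbf{The gap is in Step 3.} You assert that $B'=\sum_j By_j$ is closed under multiplication. That is the first half of the commutative Artin--Tate argument, and it relies on $B$ being central in $R$, which fails here: $R^G$ does not commute with $R$.
\begin{itemize}
\item To rewrite $(\beta y_j)(\beta' y_l)$ using the constants $b_{jlp}$, you would have to move $\beta'$ past $y_j$.
\item The relation $x_iy_j=\sum_l a_{ijl}y_l$ says nothing about $x_i\beta y_j$ for $\beta\in B$, since in general $x_i\beta\neq\beta x_i$.
\item Adding structure constants for $y_j\beta$ or $x_i\beta$ only produces new coefficients that must again be commuted past the $y_j$, an infinite regress.
\end{itemize}
So with your choice of $B$ you cannot conclude $B'=R$. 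Your closing remark that this step ``sidesteps Artin--Tate'' is backwards. The step as written is Artin--Tate, and this is exactly where noncommutativity bites.

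\textbf{The repair.} Let the algebra generators act on the side opposite to the coefficients. Your presentation $R=\sum_j R^Gy_j$ has coefficients on the left, so record right structure constants instead:
\[
y_jx_i=\sum_l a_{ijl}y_l,\qquad 1=\sum_j c_jy_j,\qquad a_{ijl},c_j\in R^G,
\]
and let $B\subseteq R^G$ be the subalgebra generated by these finitely many elements. Then $M=\sum_j By_j$ is a left $B$-module containing $1$, and $Mx_i\subseteq M$ for every $i$, because
\[
(\beta y_j)x_i=\sum_l (\beta a_{ijl})\,y_l
\]
requires no commutation. Hence $M$ contains every word in the $x_i$, so $M=R$. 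The constants $b_{jlp}$ and $d_{ij}$ are not needed.

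Your trace argument then goes through unchanged, since $\tau(\beta y)=\beta\,\tau(y)$ for $\beta\in R^G$. It gives $R^G=\sum_j B\,\tau(y_j)$, so $R^G$ is generated by $B$ and the finitely many elements $\tau(y_j)$. With this correction your proof is a complete, self-contained argument for the statement.
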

\begin{proof}
    The first claim is proven in \cite{MS}, the second one in \cite[Corollary 1.12]{Montgomery}, and the third one in \cite[Corollary 5.9]{Montgomery}.
\end{proof}
When our algebra is simple, more can be said:
\begin{theorem}\label{essential-invariant}
    Let $R$ be a simple $\k$-algebra, and $G$ a finite group of outer automorphisms of $R$ such that $|G|^{-1} \in \k$. Then
    \begin{enumerate}
        \item $R^G$ is a simple ring.
        \item $R$ is a finitely generated projective $R^G$-module.
        \item $R^G$ and $R*G$ are Morita equivalent. In particular, if $R$ is Noetherian, $R^G$ is Noetherian.
    \end{enumerate}
\end{theorem}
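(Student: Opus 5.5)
The statement is the classical package for a finite group $G$ of outer automorphisms of a simple algebra $R$ with $|G|^{-1}\in\k$. My plan is to work throughout inside the skew group ring $R*G$ with the idempotent $e=|G|^{-1}\sum_{g\in G}g$ of Lemma \ref{most-basic-lemma}. That lemma gives $e(R*G)e\simeq R^G$ and $e(R*G)=eR$.

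\textbf{Step 1: $R*G$ is simple.} Let $I$ be a nonzero two-sided ideal of $R*G$. Choose $0\neq x=\sum_g r_g g\in I$ whose support $\{g: r_g\neq 0\}$ is of minimal size. Multiplying on the right by a group element, we may assume $r_1\neq 0$. Since $R$ is simple, $Rr_1R=R$, so there are $a_i,b_i$ with $\sum a_i r_1 b_i=1$. Replacing $x$ by $\sum_i a_i x b_i$ does not enlarge the support, since $\sum a_i r_g g b_i=\sum a_i r_g g(b_i) g$. This normalizes the coefficient to $r_1=1$, and by minimality the new element is still nonzero with the same minimal support.

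Now for each $r\in R$ consider $rx-xr=\sum_g (r r_g - r_g g(r))g\in I$. Its coefficient at $g=1$ vanishes, so by minimality it is zero. Hence $r r_g=r_g g(r)$ for all $r\in R$ and all $g$.

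Fix $g\neq 1$ with $r_g\neq 0$. The two-sided ideal generated by $r_g$ is $R r_g R = r_g R$ (using the relation above), and by simplicity this equals $R$. Symmetrically $R r_g = R$. So $r_g$ is a unit, and then $g(r)=r_g^{-1} r r_g$, i.e.\ $g$ is inner, contradicting outerness. Therefore $x=1$ lies in $I$, so $I=R*G$.

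\textbf{Step 2: Morita equivalence and simplicity of $R^G$.} Since $R*G$ is simple and $e\neq 0$, the ideal $(R*G)e(R*G)$ is all of $R*G$, so $e$ is a full idempotent. Standard Morita theory then says $R*G$ is Morita equivalent to $e(R*G)e\simeq R^G$, which gives (3). Noetherianity is a Morita invariant. Moreover, $R*G$ is Noetherian when $R$ is, because it is a finitely generated free $R$-module on both sides. So $R^G$ is Noetherian whenever $R$ is. Simplicity is also Morita invariant: concretely, an ideal $J$ of $eSe$, where $S=R*G$, satisfies $J=e(SJS)e$, and $SJS$ is $0$ or $S$. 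This gives (1).

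\textbf{Step 3: projectivity.} Under the Morita equivalence, the progenerator is $e(R*G)=eR$, viewed as a left $eSe$-module, i.e.\ a left $R^G$-module. It is finitely generated projective over $eSe$ because $e$ is full: $S=\sum_{i=1}^m Sex_i$ gives $eS$ as a direct summand of $(eSe)^m$. The map $r\mapsto er$ identifies $R$ with $eR$ as left $R^G$-modules, since $e r_0 r = r_0 e r$ for $r_0\in R^G$, and it is injective because the coefficient of $g=1$ in $er$ is $r/|G|$. Hence $R$ is finitely generated projective over $R^G$, which is (2).

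The main obstacle is Step 1, the minimal-support argument forcing an inner automorphism. The rest is formal Morita theory.
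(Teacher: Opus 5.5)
Your proof is correct. The paper gives no argument of its own and simply cites \cite[Theorem 2.4, Theorem 2.5, Corollary 2.6]{Montgomery}; your three steps (simplicity of $R*G$ via the minimal-support argument forcing an inner automorphism, then fullness of $e$, then the Morita equivalence with $e(R*G)e\simeq R^G$ and the identification $R\simeq e(R*G)$ as a finitely generated projective left $R^G$-module) are the standard argument behind those cited results, so the approaches agree in substance.
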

\begin{proof}
    \cite[Theorem 2.4, Theorem 2.5, Corollary 2.6]{Montgomery}.
\end{proof}
This is relevant to the study of $\Weyl$:
\begin{proposition}\label{Weyl-outer}
    Every automorphism of the Weyl algebra is outer.
\end{proposition}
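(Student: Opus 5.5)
We need to show that every nonidentity automorphism $\sigma$ of $\Weyl$ is outer, i.e.\ not of the form $\sigma(a)=uau^{-1}$ for a unit $u\in\Weyl$. The plan is to reduce everything to a description of the units of $\Weyl$.

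First we show that the group of units of $\Weyl$ is $\k^\times$. Take the Bernstein filtration, in which each $x_i$ and $\partial_i$ has degree $1$. Its associated graded ring is the commutative polynomial ring $\k[x_1,\ldots,x_n,\xi_1,\ldots,\xi_n]$, which is a domain. Hence the principal symbol is multiplicative, and the degree is additive: $\deg(ab)=\deg a+\deg b$ for nonzero $a,b$. If $uv=1$, then $\deg u+\deg v=0$, so $\deg u=\deg v=0$ and $u\in\k^\times$.

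Now suppose $\sigma$ is inner, say $\sigma(a)=uau^{-1}$ for some unit $u$. By the first step $u$ is a nonzero scalar. Scalars are central, so $\sigma(a)=a$ for all $a$, and $\sigma=\mathrm{id}$. Thus every nonidentity automorphism is outer. This is exactly the sense needed in Theorem \ref{essential-invariant}, where $G$ is a finite group of outer automorphisms: every non-identity element of $G$ is outer.

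The only substantive point is the additivity of the degree. It follows from $\mathrm{gr}\,\Weyl$ being a domain, which holds because $[x_i,\partial_j]$ lies in lower filtration degree, so the symbols commute. This is standard, as in \cite{McConnell}, so I do not expect a real obstacle.
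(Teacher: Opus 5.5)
Your proof is correct: the paper gives no argument of its own and only cites \cite[2.4.1]{Dumas}, and your route (units of $\Weyl$ are $\k^\times$ by additivity of the Bernstein degree, so the only inner automorphism is the identity) is the standard proof of that fact, in the sense of ``outer'' needed for Theorem \ref{essential-invariant}. One small precision: commuting symbols only show that $\mathrm{gr}\,\Weyl$ is a quotient of a polynomial ring; it is the basis $\{x^\alpha\partial^\beta\}$ of $\Weyl$ that makes it the full polynomial ring, and hence a domain.
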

\begin{proof}
    \cite[2.4.1]{Dumas}.
\end{proof}
\begin{corollary}\label{Morita-invariants-Weyl}
   $\Weyl *G$ and $\Invar$ are Morita equivalent algebras. Hence the latter is a finitely generated, Noetherian simple domain with Gelfand-Kirillov dimension $2n$.
\end{corollary}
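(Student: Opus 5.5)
The plan is to combine Theorem \ref{essential-invariant} with Proposition \ref{Weyl-outer}, and then to read off the remaining properties from Theorem \ref{MS} and Proposition \ref{basic-GK-1}. Throughout, $G$ is a finite group of automorphisms of $\Weyl$. Since $\mathrm{char}\,\k=0$, we have $|G|^{-1}\in\k$.

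\textbf{Morita equivalence and simplicity.} The Weyl algebra $\Weyl$ is a simple Noetherian $\k$-algebra. By Proposition \ref{Weyl-outer}, every automorphism of $\Weyl$ is outer, so every element of $G$ except the identity is outer; this is the hypothesis that Theorem \ref{essential-invariant} requires. Parts (1) and (3) of that theorem then give that $\Invar$ is simple and Morita equivalent to $\Weyl*G$. They also give that $\Invar$ is Noetherian, because $\Weyl$ is Noetherian.

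\textbf{Finite generation and domain property.} Theorem \ref{MS} shows that $\Invar$ is a finitely generated $\k$-algebra, so it is affine. It also shows that $\Weyl$ is a finitely generated $\Invar$-module. The algebra $\Invar$ is a domain because it is a subring of the domain $\Weyl$.

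\textbf{Gelfand-Kirillov dimension.} Let $\theta\colon \Invar\hookrightarrow\Weyl$ be the inclusion, and view $M=\Weyl$ as a left $\Weyl$-module. Because $\Weyl$ is finitely generated as an $\Invar$-module, Proposition \ref{basic-GK-1} gives
\[
\GKdim_{\Invar}\Weyl=\GKdim_{\Weyl}\Weyl=2n.
\]
It remains to identify $\GKdim_{\Invar}\Weyl$ with $\GKdim\Invar$. Choose a frame $V$ of $\Invar$ and a finite generating set $F$ of $\Weyl$ over $\Invar$ with $1\in F$. Then $V^n\subseteq V^nF$, which gives $\GKdim\Invar\le\GKdim_{\Invar}\Weyl$.

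For the reverse inequality I will use the Reynolds operator $\pi=\tau/|G|\colon\Weyl\to\Invar$. This map is an $\Invar$-bimodule projection. Applying it gives
\[
\pi(V^nF)=V^n\pi(F)\subseteq V^nW,
\]
where $W=\pi(F)$ is a finite-dimensional subspace of $\Invar$. Since $\Invar$ is affine, $W\subseteq V^k$ for some fixed $k$, so $V^nW\subseteq V^{n+k}$. Projecting the module filtration in this way lets us compare the growth of $V^nF$ with that of the $V^m$. Here I plan to use the standard fact that the Gelfand-Kirillov dimension of a finite module extension equals that of the base ring: $\dim V^nF\le |F|\dim V^n$, so by Proposition \ref{growth}, $\GKdim_{\Invar}\Weyl\le\GKdim\Invar$. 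This completes the proof that $\GKdim\Invar=2n$.

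The only delicate point is the hypothesis check that $G$ consists of outer automorphisms, and Proposition \ref{Weyl-outer} settles it directly. Everything else is bookkeeping with the cited results.
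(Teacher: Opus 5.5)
Your proof is correct, and it follows the paper for the Morita equivalence, simplicity, Noetherianity, finite generation and the domain property: Theorem \ref{essential-invariant} with Proposition \ref{Weyl-outer}, then Theorem \ref{MS}, then $\Invar\subseteq\Weyl$.

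You take a different route for the Gelfand--Kirillov dimension. The paper's proof just cites Morita invariance of $\GKdim$ for algebras \cite[Proposition 8.2.9(iii)]{McConnell}. It carries $\GKdim(\Weyl*G)=2n$ across the equivalence; that equality holds because $\Weyl*G$ is a finite free $\Weyl$-module. You instead stay inside the finite extension $\Invar\subseteq\Weyl$. Proposition \ref{basic-GK-1} gives $\GKdim_{\Invar}\Weyl=2n$. The bounds $\dim V^m\le\dim V^mF\le|F|\dim V^m$, with $1\in F$, then identify this with $\GKdim\Invar$. Your argument avoids the Morita-invariance theorem. It also does not need outerness, since Theorem \ref{MS} already makes $\Weyl$ finite over $\Invar$ for any finite $G$. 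The paper's version is a one-line citation once the equivalence is in hand.

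One cleanup: drop the Reynolds-operator paragraph, since it contributes nothing. The map $\pi$ has a large kernel, so $\pi(V^mF)\subseteq V^{m+k}$ gives no information about $\dim V^mF$. The inequality $\dim V^mF\le|F|\dim V^m$ alone gives the upper bound. Also, avoid writing $V^n$ when $n$ already indexes the Weyl algebra.
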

\begin{proof}
    The Gelfand-Kirillov dimension of an algebra is a Morita invariant \cite[Proposition 8.2.9(iii)]{McConnell}.
\end{proof}
We need another important theorem about fixed rings of algebras of differential operators under the action of a finite group. If $X$ is a smooth affine variety over an algebraically closed field $\KK$ of zero characteristic, and $G$ is a finite group that acts on $X$, then $G$ acts on the algebra of differential operators $\D(X)$ by conjugation: $(g.D)(f)=(g \circ D \circ g^{-1})(f)$, $g \in G$, $D \in \D(X)$, $f \in \KK[X]$.
\begin{theorem}\label{Knop}
    Let $X$ be a smooth affine variety over an algebraically closed field of zero characteristic, and $G$ a group that acts on $X$, and hence, as above, on $\D(X)$. Then $\D(X)^G=\{D \in \D(X) \mid D(\KK[X]^G) \subset \KK[X]^G \}.$
\end{theorem}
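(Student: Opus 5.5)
We prove the two inclusions separately. The inclusion $\D(X)^G \subseteq \{D \mid D(\KK[X]^G)\subset \KK[X]^G\}$ is immediate. If $D$ is $G$-invariant, then $g\circ D\circ g^{-1}=D$ for all $g\in G$. For $f\in\KK[X]^G$ we then have $g(D(f)) = (g\circ D\circ g^{-1})(g f) = D(f)$, so $D(f)\in \KK[X]^G$. This direction uses neither smoothness nor finiteness.

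For the reverse inclusion, take $D\in\D(X)$ with $D(\KK[X]^G)\subset\KK[X]^G$. We want $g.D=D$ for every $g\in G$. Put $E=g.D-D$. Then $E$ is a differential operator on $X$ that vanishes on $\KK[X]^G$: indeed $(g.D)(f)=g(D(g^{-1}f))=g(D(f))=D(f)$ for invariant $f$, using the hypothesis that $D(f)$ is invariant. The key claim is therefore the following: a differential operator on a smooth variety $X$ that annihilates $\KK[X]^G$ is zero. Here $G$ is finite, which the paper implicitly assumes throughout. We may also assume $G$ acts faithfully, since otherwise we replace $G$ by its image in $\Autk X$, and both sides of the statement are unchanged.

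To prove the claim, we pass to a dense open set on which the quotient map is étale. The set $U\subset X$ of points with trivial stabilizer is open, dense and $G$-stable; it is nonempty because the action is faithful and $X$ is irreducible. The quotient $\pi: X\to X/G=\Spec\,\KK[X]^G$ restricts to an étale Galois cover $U\to U/G$. We pick a $G$-invariant $h\in\KK[X]^G$ with $X_h\subset U$ nonempty, which is possible since $X\setminus U$ is $G$-stable and closed. Then $\KK[X_h]^G=\KK[X]^G_h$, and $\KK[X_h]$ is étale over it. Next we localize $E$ to $X_h$, using that differential operators extend to localizations; $E$ still kills $\KK[X]^G_h$ by the Leibniz-type formula for $[E,h^{-1}]$, arguing by induction on the order. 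Since $\pi$ is étale on $X_h$, at every point the pullbacks of local coordinates of $X_h/G$ form local coordinates of $X_h$. So $E$ can be written locally as $\sum a_\alpha\partial^\alpha$ in coordinates $u_1,\dots,u_m$ coming from invariant functions. Applying $E$ to the monomials $u^\beta$, in increasing order of $|\beta|$, shows that all $a_\alpha$ vanish. Hence $E|_{X_h}=0$. Since $\KK[X]\hookrightarrow\KK[X_h]$ is injective, $E=0$ on $\KK[X]$, which gives $g.D=D$.

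The main obstacle is this étale step: we need to know that on the free locus, invariant functions provide local coordinates and that differential operators are determined by their action on them. An alternative we could try first is a more algebraic route. The extension $\KK(X)/\KK(X)^G$ is a finite separable field extension. Any derivation of $\KK(X)^G$ extends uniquely to $\KK(X)$, and likewise every differential operator on $\KK(X)$ is determined by its restriction to $\KK(X)^G$, since $\D(\KK(X))=\KK(X)\otimes$ operators generated by $\Der(\KK(X)^G)$ extended. In this formulation, uniqueness of extensions of differential operators through separable algebraic extensions replaces the coordinate computation. Smoothness is used to identify $\D(X)$ with a subring of $\D(\KK(X))$ generated by vector fields.
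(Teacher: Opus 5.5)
Your proof is correct, but it differs from the paper, which gives no argument and only cites \cite[Theorem 3.1]{Knop}. You prove the statement directly. The inclusion $\D(X)^G\subseteq\{D \mid D(\KK[X]^G)\subset\KK[X]^G\}$ is formal. The reverse inclusion reduces to the claim that $E=g.D-D$, which kills $\KK[X]^G$, must vanish. You prove this on the free locus, where $X_h\to X_h/G$ is étale and invariant functions give local coordinates.

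What your route buys is transparency about the hypotheses. It shows that finiteness of $G$ is essential, and you are right to impose it: read literally, ``a group'' is false. For the scaling action of $\KK^\times$ on the affine line, $\partial$ maps $\KK[x]^{\KK^\times}=\KK$ into $\KK$ but is not invariant. Smoothness, by contrast, plays no real role.

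In fact the étale machinery can be dropped; this is your ``alternative'', stated precisely. Choose a transcendence basis $u_1,\dots,u_m$ of $\KK(X)^G$ lying in $\KK[X]^G$. Since $\KK(X)$ is finite separable over $\KK(u_1,\dots,u_m)$, the $du_i$ form a basis of $\Omega_{\KK(X)/\KK}$. Hence $\D(\KK(X))=\bigoplus_\alpha \KK(X)\,\partial_u^\alpha$, where the $\partial_{u_i}$ are the dual derivations. Because $\KK[X]$ is a domain, $\D(X)$ embeds in $\D(\KK(X))$; this needs no smoothness, contrary to your last sentence. Every monomial $u^\beta$ already lies in $\KK[X]^G$, so the triangular computation $E(u^\beta)=0$, in increasing $|\beta|$, forces all coefficients of $E$ to vanish.

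This version avoids the free locus, the étaleness of $X_h\to X_h/G$ and smoothness of $X_h/G$, and your induction extending $E$ to $\KK[X]^G_h$. The paper's citation is simply shorter.
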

\begin{proof}
    \cite[Theorem 3.1]{Knop}.
\end{proof}
The following theorem is probably well known to specialists, but its proof is difficult to find in the literature.
\begin{theorem}\label{GK-is-Morita-for-modules}
    Let $A$ and $B$ be two Morita equivalent algebras and $F: A\text{-}\Mod \rightarrow B\text{-}\Mod$, $G: B\text{-}\Mod \rightarrow A\text{-}\Mod$ the two functors that realize the equivalence of the categories of modules. Let $M$ be a finitely generated $A$-module. Then $\GKdim_A \, M = \GKdim_B \, F(M)$.
\end{theorem}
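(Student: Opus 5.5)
By Morita theory there is a progenerator $P$ of $A$-$\Mod$ such that, with $B \cong \operatorname{End}_A(P)^{op}$, the equivalence is $F(M) \cong \operatorname{Hom}_A(P,M)$. The inverse is $G(N) \cong P \otimes_B N$. The plan is to realize both $A$ and $B$ inside a single algebra in a way that lets Proposition \ref{basic-GK-1} apply, and then to compare growth functions directly. Since $P$ is finitely generated projective, it is a direct summand of $A^m$ for some $m$. Hence there is an idempotent $f \in M_m(A)$ with $P \cong A^m f$ and $B \cong f M_m(A) f$. Moreover $A^m f$ is a generator, which means $M_m(A) f M_m(A) = M_m(A)$. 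So we may reduce to two standard cases: passing from $A$ to $M_m(A)$, and passing from a ring $C$ to a full corner $fCf$.

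\textbf{Step 1: matrix rings.} The equivalence $A$-$\Mod \to M_m(A)$-$\Mod$ sends $M$ to $M^m$, the column vectors. Take a frame $V$ of $A$ and a generating space $F$ of $M$. Let $W$ be the span of the matrix units $e_{ij}$ and of $V\cdot 1$; this is a frame of $M_m(A)$, and $F^m$ generates $M^m$. One checks that
\[ (V^n F)^m \subseteq W^{n+1} F^m \subseteq (V^{n+1}F)^m. \]
Therefore $d_{F^m,W}(n)$ is squeezed between $m\, d_{F,V}(n)$ and $m\, d_{F,V}(n+1)$. Proposition \ref{growth}(5) then gives equal Gelfand-Kirillov dimensions. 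Note also that $M_m(A)$ is affine whenever $A$ is.

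\textbf{Step 2: full corners.} Let $C$ be an affine algebra and $f$ an idempotent with $CfC = C$. The equivalence sends $M$ to $fM$, with inverse $N \mapsto Cf \otimes_{fCf} N$. Since $1 = \sum_k a_k f b_k$, we have $M = \sum_k a_k fM$. This is the key identity.

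First, $fCf$ is affine. Choose a frame $V$ of $C$ containing $f$, all $a_k$ and all $b_k$. Every element of $fV^nf$ can be rewritten by inserting $1 = \sum a_k f b_k$ between consecutive letters. This shows $fV^nf$ is spanned by products of elements from the finite-dimensional space $U = fV^3f$. Thus $fCf$ is generated by $U$, and $fV^nf \subseteq U^{n}$ up to a linear rescaling of $n$.

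Next, take a finite-dimensional generating space $F$ of $M$, and set $F' = fVF$. Then $F'$ generates $fM$, since $M = \sum_k a_k f M$ gives $fM = fCF = \sum fCfb_kF$ after enlarging $F$ appropriately. We compare dimensions in both directions:
\begin{itemize}
\item Since $U^n F' \subseteq V^{3n+1}F$, we get $\GKdim\, fM \le \GKdim\, M$.
\item Since $V^nF \subseteq \sum_k a_k f V^{n} F$ and $fV^nF \subseteq U^{cn}F'$, we get $d_{F,V}(n) \le \ell \cdot \dim U^{cn}F'$, where $\ell$ is the number of terms $a_k$. This gives the reverse inequality.
\end{itemize}
Proposition \ref{growth}(5) turns these containments into equality of Gelfand-Kirillov dimensions.

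\textbf{Combining.} The functor $F$ is naturally isomorphic to the composite of the equivalence in Step 1 with the equivalence in Step 2, for $C = M_m(A)$. Gelfand-Kirillov dimension is invariant under isomorphism of modules and of algebras. Finite generation is preserved throughout, because both $M^m$ and $fM$ are finitely generated. The statement follows.

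The main obstacle is the bookkeeping in Step 2, in particular the rewriting that shows $fV^nf$ is contained in a power of $U$ growing only linearly in $n$. This is where fullness of the idempotent is genuinely used. An alternative is to cite \cite[Proposition 8.2.9]{McConnell}, whose proof of the algebra case runs the same way.
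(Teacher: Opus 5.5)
Your proof is correct. It differs from the paper's only in that the paper gives no argument of its own: it simply cites \cite[Theorem 1.7]{BH}.

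You instead give a self-contained proof. You factor the equivalence $M\mapsto\operatorname{Hom}_A(P,M)$, with $P\cong A^mf$, into the matrix equivalence $M\mapsto M^m$ followed by the full-corner equivalence $N\mapsto fN$ for $C=M_m(A)$. In each step you compare growth functions by explicit containments of filtered pieces, using only Proposition~\ref{growth}(5). Compared with the citation, your route makes two things visible:
\begin{itemize}
\item Step~2 shows that $B\cong fM_m(A)f$ is affine whenever $A$ is. The statement needs this tacitly, since the paper defines $\GKdim$ of modules only over affine algebras.
\item It shows exactly where fullness $1=\sum_k a_kfb_k$ is used: in the affineness of the corner, and in the reverse inequality $\GKdim_C M\le\GKdim_{fCf} fM$.
\end{itemize}
The citation is, of course, shorter.

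A few constants in your write-up are off. This is harmless, since only bounds linear in $n$ matter.
\begin{itemize}
\item Your Step~1 containments squeeze $d_{F^m,W}(n+1)$, not $d_{F^m,W}(n)$, between $m\,d_{F,V}(n)$ and $m\,d_{F,V}(n+1)$.
\item A product of $n$ elements of $U=fV^3f$ carries $n+1$ factors of $f$. So you get $U^nF'\subseteq V^{4n+3}F$, not $V^{3n+1}F$.
\item Inserting $1=\sum_k a_kfb_k$ gives $V^nF\subseteq\sum_k a_kfV^{n+1}F$. Writing $fv_1\cdots v_nx=\sum_k(fv_1\cdots v_na_kf)(fb_kx)$ gives $fV^nF\subseteq U^{n+1}F'$.
\end{itemize}
Also, you do not need to enlarge $F$ for $F'=fVF$ to generate $fM$. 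Directly, $fM=fCF=\sum_k fCa_kf\,b_kF\subseteq fCf\cdot F'$.
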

\begin{proof}
    \cite[Theorem 1.7]{BH}.
\end{proof}
\subsection{D-simple rings and D-modules on select varieties}
\begin{definition}
    Let $A$ be a commutative $\KK$-algebra, and let $d \in \Der_\KK A$ be a derivation. $A$ is called a $d$-\textit{simple algebra} if it does not contain nonzero proper ideals $I$ such that $d(I) \subset I$.
\end{definition}
When $A=\KK[x_1,\ldots,x_n]$ is the polynomial algebra, there are many derivations that make it $d$-simple. Denote $\KK[x_1,\ldots,x_n]$ by $\KK[\bar{X}]$. Following an idea of Shamsuddin (cf. \cite[Theorem 2.3.16]{Archer}), we have:
\begin{theorem}\label{d-simple}
    Let $a_2, \ldots, a_n$, $b_2, \ldots, b_n$ be nonzero polynomials of $\KK[x_1]$. If $a_i/a_j \notin \mathbb{Q}$ for $2 \leq i < j \leq n$ and $\deg a_i > \deg b_i$ for $i=2, \ldots ,n$, then $\KK[\bar{X}]$ is $d$-simple, where
    \[ d= \partial_1 + \sum_{i \geq 2}(x_i a_i + b_i) \partial_i.\]
\end{theorem}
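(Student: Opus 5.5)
Suppose $I\neq 0$ is an ideal of $\KK[\bar X]$ with $d(I)\subset I$; we show $1\in I$. The idea is to pick $f\in I$ of minimal size and show that a suitable combination of $d(f)$ and $f$ is a smaller element of $I$, which forces $f$ to be constant.

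\emph{Elimination of $x_1$.} If $I\cap \KK[x_1]\neq 0$, let $p\in I\cap\KK[x_1]$ be nonzero of minimal degree. Then $d(p)=p'\in I\cap \KK[x_1]$ has smaller degree, so $p'=0$ and $p$ is a nonzero constant, giving $I=\KK[\bar X]$. Hence it suffices to show $I\cap\KK[x_1]\neq 0$.

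\emph{Reduction in the variables $x_2,\dots,x_n$.} Regard $f\in\KK[\bar X]$ as a polynomial in $x_2,\dots,x_n$ with coefficients in $\KK[x_1]$, and write $f=\sum_\alpha f_\alpha(x_1)\,x^\alpha$ with $\alpha\in\mathbb N^{n-1}$.
\begin{itemize}
\item The part $\sum_i x_ia_i\partial_i$ of $d$ preserves each monomial: $x^\alpha\mapsto (\sum_i \alpha_i a_i)x^\alpha$.
\item The part $\partial_1$ only differentiates the coefficients $f_\alpha$.
\item The part $\sum_i b_i\partial_i$ strictly lowers the total $(x_2,\dots,x_n)$-degree.
\end{itemize}
So, modulo lower total degree,
\[
d(f)_\alpha \equiv f_\alpha' + \Big(\textstyle\sum_i\alpha_i a_i\Big) f_\alpha .
\]
Choose $f\in I\setminus\{0\}$ minimising first the total degree $N$ in $x_2,\dots,x_n$, then the number of monomials of degree $N$, then $\deg_{x_1}$ of a chosen leading coefficient $f_\beta$ with $|\beta|=N$. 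Suppose $N>0$ and set $c_\alpha=\sum_i\alpha_i a_i$. Using $f_\beta$ as pivot, form
\[
g=f_\beta\, d(f)-\big(f_\beta'+c_\beta f_\beta\big) f\in I .
\]
The coefficient of $x^\beta$ cancels. For another $\alpha$ with $|\alpha|=N$, the coefficient becomes
\[
f_\beta f_\alpha'-f_\beta' f_\alpha+(c_\alpha-c_\beta)f_\alpha f_\beta .
\]
Since $\alpha\neq\beta$ and the $a_i$ are $\mathbb Q$-independent in the sense $a_i/a_j\notin\mathbb Q$, we get $c_\alpha-c_\beta\neq 0$. The plan is to show this coefficient is nonzero by a degree count in $x_1$, using that $\deg(c_\alpha-c_\beta)\ge 0$ dominates the Wronskian term. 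That would make $g$ nonzero with fewer top-degree monomials, or keep the monomial count while lowering degree, contradicting minimality. Hence $f$ has a single top monomial $f_\beta x^\beta$.

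\emph{The main obstacle: a single top monomial.} Now $g$ has top degree below $N$ (unless it vanishes), so minimality of $N$ forces $g=0$, i.e.\ $f$ divides $f_\beta d(f)$. I expect this step to be the hard one. The plan is to compare the $x^\gamma$ terms of degree $N-1$ and show $g$ is nonzero using $\deg a_i>\deg b_i$. The terms $b_i\partial_i(f_\beta x^\beta)$ contribute $\beta_i b_i f_\beta x^{\beta-e_i}$, and the lower-degree part of $f$ contributes $(c_\gamma-c_\beta)$ times its coefficients. Since $\deg a_i>\deg b_i$, the factor $c_\gamma-c_\beta$ (which involves some $a_i$) cannot absorb $b_i$ with matching degree. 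Concretely, I would compare $x_1$-degrees in the equation $f_\beta(f_\gamma'+c_\gamma f_\gamma)+\beta_ib_if_\beta^{2}\cdots=(f_\beta'+c_\beta f_\beta)f_\gamma$ and derive a contradiction. If that degree argument gets messy, the fallback is to pass to $\KK(x_1)[x_2,\dots,x_n]$ and use a Shamsuddin-style argument inductively on $n$: adjoin one variable at a time, and show that a $d$-stable ideal of $\KK[x_1,\dots,x_k][x_{k+1}]$ meets $\KK[x_1,\dots,x_k]$. This uses exactly the conditions that $x_{k+1}a_{k+1}+b_{k+1}$ admits no polynomial solution $h$ to $d(h)=(c+ a_{k+1})h$-type equations, which is where $\deg a>\deg b$ and $a_i/a_j\notin\mathbb Q$ enter.

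Once $N=0$ we have $f\in I\cap\KK[x_1]$, and the elimination step above finishes the proof.
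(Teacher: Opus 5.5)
There is a genuine gap. Your elimination of $x_1$ and the pivot construction are fine. The problem is the reduction to a single top monomial, which rests on the claim that $c_\alpha\neq c_\beta$ whenever $\alpha\neq\beta$ and $|\alpha|=|\beta|$. That claim does not follow from the hypothesis.

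It requires that no integer combination $\sum_i(\alpha_i-\beta_i)a_i$ with coefficient sum zero vanishes. This follows from $\mathbb Q$-linear independence of the $a_i$ (the hypothesis of Theorem~\ref{Stafford-generalized}), but not from the pairwise condition $a_i/a_j\notin\mathbb Q$ once $n\ge 4$. For example, take $n=4$, $a_2=x_1$, $a_3=x_1^2$, $a_4=(x_1+x_1^2)/2$, $b_i=1$. No ratio $a_i/a_j$ is rational, yet $\alpha=(1,1,0)$ and $\beta=(0,0,2)$ give $c_\alpha=a_2+a_3=2a_4=c_\beta$. For such a pair the $x^\alpha$-coefficient of $g$ is just the Wronskian $f_\beta f_\alpha'-f_\beta'f_\alpha$, which vanishes when $f_\alpha,f_\beta$ are constants. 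So your argument does not exclude a minimal $f$ with top part $x_2x_3-x_4^2$, and ``hence $f$ has a single top monomial'' is unjustified.

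The single-monomial step is also left open, and the degree count you propose does not close on its own. In $f_\beta f_\gamma'-f_\beta'f_\gamma-a_if_\beta f_\gamma+\beta_ib_if_\beta^2=0$ (with $\gamma=\beta-e_i$), the last two terms have equal degree whenever $\deg f_\beta-\deg f_\gamma=\deg a_i-\deg b_i$, so they can cancel. What closes it is your third minimality, which you never use. The element $d(f)-c_\beta f\in I$ has top part $f_\beta'x^\beta$, so $f_\beta$ is constant, say $1$. Then $d(f)=c_\beta f$, and the $x^{\beta-e_i}$-coefficient gives $f_\gamma'=a_if_\gamma-\beta_ib_i$. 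This is impossible: if $f_\gamma\neq0$ the right side has degree $\deg a_i+\deg f_\gamma>\deg f_\gamma'$, and if $f_\gamma=0$ then $b_i=0$.

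Both points can be repaired within your framework. Without the false claim, the pivot argument still forces $g=0$, since otherwise $g$ has degree $N$ and fewer top monomials. Hence every top monomial $\alpha$ of a minimal $f$ has $c_\alpha=c_\beta=:c$ and $f_\alpha\in\KK f_\beta$. Applying the same trick to $d(f)-cf$ makes all top coefficients constant and gives $d(f)=cf$. At $x^\gamma$ with $\gamma=\beta-e_i$, any $j$ with $\gamma+e_j$ in the top support satisfies $a_j=c-c_\gamma=a_i$. So $j=i$ and the same contradiction follows, using only $a_i\neq a_j$.

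The route the paper points to (Coutinho's theorem, following Shamsuddin's idea) avoids comparing distinct monomials altogether:
\begin{itemize}
\item By induction $R_k=\KK[x_1,\dots,x_k]$ is $d$-simple, and $R_k[x_{k+1}]$ is $d$-simple iff no $h\in R_k$ satisfies the inhomogeneous equation $d(h)=a_{k+1}h+b_{k+1}$. (This is not the Darboux-type equation in your fallback.)
\item Expand such an $h$ in the single variable $x_k$. If its $x_k$-degree $m$ is positive, the leading coefficient satisfies $d(h_m)=(a_{k+1}-ma_k)h_m$. So $h_mR_{k-1}$ is a $d$-stable ideal, whence $h_m\in\KK^*$ and $a_{k+1}=ma_k$, contradicting $a_{k+1}/a_k\notin\mathbb Q$.
\item Descending variable by variable to $\KK[x_1]$, the equation $h'=a_{k+1}h+b_{k+1}$ is impossible by degrees.
\end{itemize}
This one-variable-at-a-time expansion is exactly what the pairwise hypothesis is tailored to.
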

\begin{proof}
    \cite[Theorem 3.3]{Coutinho0}.
\end{proof}
In \cite{Stafford}, a family of derivations $d$ of $\KK[\bar{X}]$ was given, such that $\Weyl/\Weyl d$ is irreducible non-holonomic of Gelfand-Kirillov dimension $2n-1$. We will now quote another result from \cite{Coutinho0} that generalizes Stafford's method:
\begin{theorem}\label{Stafford-generalized}
    For $2 \leq i \leq n$, let $a_i, b_i, h_i$ be nonzero polynomials of $\KK[\bar{X}]$ such that the $a_i$ are linearly independent over $\mathbb{Q}$ and $\deg \, a_i > \max\{\deg b_i, \deg h_i \}$ for $i=2,\ldots, n$. If $d$ is as in Theorem \ref{d-simple} and $h=\sum_{i \geq 2} h_i x_i$, then $\Weyl/\Weyl(d+h)$ is an irreducible non-holonomic module of Gelfand-Kirillov dimension $2n-1$.
\end{theorem}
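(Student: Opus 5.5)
This is Coutinho's generalization of Stafford's argument, so I follow his strategy. Write $A=\Weyl$ and $\theta=d+h$. The proof has three parts.

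\emph{Step 1: Gelfand--Kirillov dimension.} First, $A\theta\neq 0$, and $A$ is a domain, so $A/A\theta$ is a proper quotient by a nonzero left ideal. Hence $\GKdim\, A/A\theta\le 2n-1$; this follows from the standard fact that for a cyclic module $A/Aa$ with $a\neq 0$ one has $\GKdim\, A/Aa = \GKdim\, A-1$, which is easy to see via the Bernstein filtration and the principal symbol of $\theta$.

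Once irreducibility is proved, the lower bound comes for free. An irreducible module of dimension $\le 2n-1$ is either holonomic (dimension $n$) or of dimension between. So it suffices to show that $A/A\theta$ is irreducible and not holonomic. Non-holonomicity itself follows from the exact value $2n-1$, which we get by comparing associated graded modules: $\mathrm{gr}(A/A\theta)=\k[x,\xi]/(\sigma(\theta))$, a hypersurface of dimension $2n-1$. So non-holonomicity (for $n\ge2$) is immediate, and the real content is irreducibility.

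\emph{Step 2: Reducing irreducibility to a statement about $\theta$.} A nonzero submodule of $A/A\theta$ has the form $L/A\theta$. Let $u\in L\setminus A\theta$. It suffices to show $A\theta+Au=A$.

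Here is Stafford's mechanism. Pass to the localization $B=\k(x_1)[x_2,\dots,x_n]\langle\partial_1,\dots,\partial_n\rangle$, or more concretely use the $\k[\bar X]$-module structure. Modulo $A\theta$, every element is congruent to one whose $\partial_1$-degree is $0$, since $\theta=\partial_1+\cdots$ lets us eliminate $\partial_1$ by division. So $A/A\theta\cong \k[\bar X]\langle\partial_2,\dots,\partial_n\rangle =:R$ as a module over that subring, with $\partial_1$ acting as $-\big(\sum(x_ia_i+b_i)\partial_i+h\big)$ after commutation.

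A cyclic submodule generated by $u\in R$ is stable under multiplication by $\k[\bar X]$ and under the twisted derivation. Take the element of minimal order in $\partial_2,\dots,\partial_n$ in the submodule, and then, among those, minimal leading-coefficient degree. Commuting with $x_j$ lowers the order in $\partial_j$, so minimality forces order $0$: the submodule contains a nonzero polynomial $f\in\k[\bar X]$. The set of such $f$ is an ideal $I$ of $\k[\bar X]$ stable under the operator $f\mapsto d(f)+hf$. Since $h$ is multiplication, this amounts to $d(I)\subset I$ modulo $I$. By Theorem \ref{d-simple}, $\k[\bar X]$ is $d$-simple, hence $I=\k[\bar X]\ni 1$ and the submodule is everything.

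\emph{Step 3: The main obstacle.} The delicate point is the minimal-order argument in Step 2. Applying $\mathrm{ad}\,\partial_1$, i.e.\ acting by $\theta$, to a minimal element $P=\sum_{|\alpha|=m} p_\alpha\partial^\alpha+\dots$ produces new top-order terms with coefficients $x_ia_i$, coming from $[\partial^\alpha, x_ia_i\partial_i]$. We must show that a suitable combination $\theta P - P\theta - cP$ strictly decreases $\deg$ of the leading coefficients, unless $m=0$.

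Here the hypotheses enter. Linear independence of the $a_i$ over $\mathbb Q$ guarantees that the eigenvalue-type factors $\sum\alpha_i a_i$ never cancel. The degree condition $\deg a_i>\max\{\deg b_i,\deg h_i\}$ ensures that the top-degree parts are controlled by the $a_i$ alone, as in Stafford's original computation. I would carry out this leading-term analysis with respect to a total-degree filtration in $x$, following \cite{Stafford} and \cite[Theorem 3.3]{Coutinho0}. This is where I expect the bulk of the work to lie.
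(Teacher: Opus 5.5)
Your Step 1 is fine, and it is essentially all the paper itself adds. The paper's proof cites \cite[Theorem 3.6]{Coutinho0} for irreducibility and remarks that $d+h$ has order $\geq 1$, which gives $\GKdim\,\Weyl/\Weyl\theta=2n-1$ via $\mathrm{gr}(\Weyl/\Weyl\theta)=\k[x,\xi]/(\sigma(\theta))$. Your attempt to reprove irreducibility, however, breaks at the decisive step.

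Under $\Weyl/\Weyl\theta\cong R=\k[\bar X]\langle\partial_2,\dots,\partial_n\rangle$, submodules correspond to left ideals $N\subseteq R$ that are stable under $\delta=\mathrm{ad}\,\theta|_R$. A left ideal is closed under left multiplication by $x_j$, but not under $\mathrm{ad}\,x_j$, which would need right multiplication. So ``commuting with $x_j$ lowers the order, hence minimality forces order $0$'' is not a valid inference.

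It cannot be patched as stated. Your Step 2 uses nothing but $d$-simplicity, so it applies verbatim to $h=0$. Yet $\Weyl/\Weyl d$ is not simple: $P\mapsto P(1)$ maps it onto $\k[\bar X]$ with nonzero kernel (the class of $\partial_2$ lies in it). That kernel corresponds to the $\delta$-stable left ideal $\sum_{j\geq 2}R\partial_j$, which contains no nonzero polynomial. Its minimal-order elements, such as $\partial_2$, have order $1$.

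Step 3 does not supply the missing argument. The top-order part of $\delta(P)$ does not see $h$ at all, since $[h,P]$ has lower order. At the symbol level $\bar\delta(\xi_j)=-a_j\xi_j$, so $(\xi_j)$ is a $\bar\delta$-stable ideal. Hence no pure leading-term computation can force the order down to $0$.

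Concretely, $\delta(\partial_j)=-a_j\partial_j-\partial_{x_j}(h)$, so $R(\partial_j+g)$ is $\delta$-stable exactly when $d(g)+a_jg=\partial_{x_j}(h)$. What must be shown is the following. After cancelling the top-order part of $\delta(P)$ against a multiple of $P$ (the $\mathbb{Q}$-independence of the $a_i$ keeps the weights $\sum_j\alpha_j a_j$ distinct), the lower-order remainder must be nonzero. In other words, equations of this kind must have no solutions. This is where $h_i\neq 0$ and $\deg a_i>\deg h_i$ have to enter, and it is the real content of \cite[Theorem 3.6]{Coutinho0}.

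Your proposal explicitly defers exactly this point. The reference you give for it, \cite[Theorem 3.3]{Coutinho0}, is the $d$-simplicity criterion (Theorem \ref{d-simple}), not the irreducibility result.
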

\begin{proof}
    \cite[Theorem 3.6]{Coutinho0} and the fact that $d+h$ is an operator of degree greater than or equal to one (\cite[pp. 407]{Coutinho0}).
\end{proof}
Now assume $\KK$ is algebraically closed.
\begin{definition}
    A smooth affine variety $X$ of dimension $n$ is called a \textit{select} variety if its module of K\"ahler differentials $\Omega_\KK X$ is free of rank $n$ and has a basis $dx_1, \ldots, dx_n$.
\end{definition}
Let $x_1, \ldots, x_n$ be the corresponding elements of $\mathcal{O}(X)$, which are algebraically independent. Let $\KK[\widehat{X}]$ be the polynomial algebra $\KK[x_1,\ldots,x_n]$. Then there are derivations $\partial_1, \ldots, \partial_n$ of $\mathcal{O}(X)$ such that $\partial_i(x_j)=\delta_{ij}$, and moreover $\Der_\KK \mathcal{O}(X)$ is a free $\mathcal{O}(X)$-module with basis $\partial_1, \ldots, \partial_n$ \cite[Corollary 15.1.12]{McConnell}. So in particular, every derivation $d$ of $\KK[\widehat{X}]$ extends to a unique derivation of $\mathcal{O}(X)$. It is also clear that $\D(X)$ contains a copy of the Weyl algebra $\Weyl=\D(\KK[\widehat{X}])$.
\begin{theorem}\label{theorem-select-varieties}
    Let $X$ be a select variety and let $d$ be a derivation of $\KK[\widehat{X}]$ such that $\KK[\widehat{X}]$ is $d$-simple. Then
    \begin{enumerate}
    \item $\mathcal{O}(X)$ is $d$-simple.
    \item if, for some $f \in \KK[\widehat{X}]$, $\Weyl/\Weyl(d+f)$ is a simple $\Weyl$-module, then the $\D(X)$-module $\D(X)/\D(X)(d+f)$ is also simple. Its Gelfand-Kirillov dimension is $2n-1$.
    \end{enumerate}
\end{theorem}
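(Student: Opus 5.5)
Part (1) and part (2) are handled separately. The main tool is that $\mathcal{O}(X)$ is a finitely generated module over the polynomial subalgebra $\KK[\widehat{X}]$, or at least an integral-type extension of it in which every nonzero ideal contracts to a nonzero ideal. If the contraction property fails in general, we would instead use that $\mathcal{O}(X)$ is étale over $\KK[\widehat{X}]$, since the $dx_i$ form a basis of $\Omega$.

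For (1), let $I\subset \mathcal{O}(X)$ be a nonzero ideal with $d(I)\subset I$. Then $J=I\cap \KK[\widehat{X}]$ is $d$-stable, because $d$ preserves $\KK[\widehat{X}]$. We show $J\neq 0$. Take $0\neq a\in I$. Since $\mathcal{O}(X)$ is a domain of the same transcendence degree $n$ over $\KK$, the element $a$ is algebraic over $\KK(\widehat{X})$. Choose a minimal polynomial relation $a^m+c_{m-1}a^{m-1}+\dots+c_0=0$ with $c_i\in\KK(\widehat{X})$, and clear denominators so that $c_i\in\KK[\widehat{X}]$ with leading coefficient $c_m$. Minimality and the domain property give $c_0\neq 0$, and $c_0\in aO(X)\cap \KK[\widehat{X}]\subset J$. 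By $d$-simplicity of $\KK[\widehat{X}]$, $J=\KK[\widehat{X}]$, so $1\in I$ and $I=\mathcal{O}(X)$.

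For (2), put $\theta=d+f$ and $M=\D(X)/\D(X)\theta$. Following Coutinho's strategy, we use that $\D(X)$ is a domain and that $\D(X)=\mathcal{O}(X)\otimes_{\KK[\widehat X]}\Weyl$ as a left $\mathcal{O}(X)$-module, since the $\partial_i$ extend. Let $N\supsetneq \D(X)\theta$ be a left ideal. The standard argument runs as follows. For $Q\in N\setminus\D(X)\theta$, divide by $\theta$ with respect to the order filtration after localizing, so that $\theta$ has symbol $\sigma(d)$. Then consider the ideal of $\mathcal{O}(X)$ formed by the leading coefficients, or more precisely the "remainders" of minimal order. This ideal is $d$-stable, because commutators $[\theta,\cdot]$ act on coefficients via $d$ plus lower terms. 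By (1) it contains $1$, which forces $N$ to contain an element of order reduced by one. Inducting down the order gives $1\in N$.

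The obstacle is that the $\Weyl$-simplicity hypothesis enters Coutinho's argument through the structure of the polynomial case. The cleaner route I would try first is base change: $\D(X)\theta\cap\Weyl=\Weyl\theta$, and $M\supset \Weyl/\Weyl\theta$. We then show that every nonzero submodule of $M$ meets the image of $\Weyl/\Weyl\theta$ nontrivially. This would follow from $\mathcal{O}(X)$-torsion-type arguments using (1), since the set of $g\in\mathcal{O}(X)$ with $g\bar1\in N'$ is a $d$-stable ideal ($\theta g=g\theta+d(g)$). Once $N'$ meets $\Weyl\bar1$ it contains $\bar1$ by simplicity, so $N'=M$. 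The key technical step is producing a nonzero element of the form $g\bar 1$ in $N'$: reduce an element of minimal order modulo $\theta$ and commute with $\theta$ to lower its order.

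For the GK dimension, $\GKdim M\le 2n-1$ because $\theta$ is nonzero in the domain $\D(X)$ of GK dimension $2n$, via a standard filtration argument. Conversely, $\GKdim M\ge \GKdim_{\Weyl}(\Weyl/\Weyl\theta)=2n-1$ by Proposition \ref{basic-GK-1} and Theorem \ref{Stafford-generalized}, using the injection $\Weyl/\Weyl\theta\hookrightarrow M$.
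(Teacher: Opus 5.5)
Your part (1) is correct. The constant term of a minimal relation of $0\neq a\in I$ over $\KK(\widehat X)$ is a nonzero element of the $d$-stable ideal $I\cap\KK[\widehat X]$, and that settles it. The dimension count in (2) is also fine in substance, with two small corrections. First, $\GKdim\,\Weyl/\Weyl\theta=2n-1$ holds simply because $\theta$ has order at least one; Theorem \ref{Stafford-generalized} only covers special $d,h$. Second, the injectivity of $\Weyl/\Weyl\theta\to M$ needs a short argument, for instance $\KK[\widehat X]$-torsion-freeness of $\Weyl/\Weyl\theta$.

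The gap is the simplicity in (2). Both of your routes hinge on the same step: in a nonzero submodule $N'$, take an element of minimal order, reduce it modulo $\theta$, and commute with $\theta$ to lower the order until some $g\bar 1\neq 0$ appears. But a nonzero $g\bar 1\in N'$ already gives $\bar 1\in N'$ by part (1) alone. So this mechanism never uses the hypothesis that $\Weyl/\Weyl(d+f)$ is simple, and without that hypothesis the conclusion is false.

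Here is a counterexample. Let $X$ be affine space (so $\D(X)=\Weyl$), $n=2$, $f=0$, and let $d=\partial_1+c\,\partial_2$ be a simple derivation as in Theorem \ref{d-simple}. Since $d(1)=0$, the map $\bar P\mapsto P(1)$ sends $\Weyl/\Weyl d$ onto the holonomic module $\KK[\widehat X]$. Its kernel $(\Weyl\partial_1+\Weyl\partial_2)/\Weyl d$ is therefore a nonzero proper submodule containing no nonzero $g\bar 1$. It contains $u=\bar\partial_2$, which is already reduced modulo $\theta$ with leading coefficient $1$. Yet $\theta u=-\partial_2(c)\,u$, so commuting and subtracting returns $0$, not an element of smaller order. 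The order-lowering step is therefore not merely unproved; it cannot work in this generality.

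What is missing is a transfer statement: every nonzero $\D(X)$-submodule of $M$ meets the image of $\Weyl/\Weyl\theta$ nontrivially. Once you have that, simplicity over $\Weyl$ gives $\bar 1\in N'$. This is exactly where the hypothesis on $\Weyl/\Weyl(d+f)$ enters, and it depends on how $\mathcal{O}(X)$ sits over $\KK[\widehat X]$.

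The transfer is not formal, because simplicity is not preserved by \'etale base change in general. For example, the $\D$-module over $\KK(x)$ generated by $e^{\sqrt{x}}$ is simple, but it splits over $\KK(\sqrt{x})$.

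The paper does not reprove this step. It invokes \cite[Theorem 2.2]{Coutinho0}, together with the fact that $d+f$ has order at least one for the dimension.

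In the only case used later in the paper, $\mathcal{O}(X)=\KK[\widehat X]_g$, your argument is easy to repair. For $0\neq\bar P\in N'$ one has $g^kP\in\Weyl$ for $k\gg 0$. Moreover $g^k\bar P\neq 0$ because $g$ is a unit in $\D(X)$. Hence $N'$ meets the image of $\Weyl/\Weyl\theta$, and simplicity over $\Weyl$ finishes the proof.
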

\begin{proof}
    \cite[Theorem 2.2]{Coutinho0} and the fact that $(d+f)$ is a differential operator of degree $\geq 1$.
\end{proof}
\section{Bernstein inequality for $\Invar$ and basic properties}
In this section, for completeness, we give an elementary proof of the Bernstein inequality for the algebras $\Invar$ and discuss some basic results about the Gelfand-Kirillov dimension of their modules.
\begin{proposition}\label{Bernstein-skew-product-ring}
    Let $G$ be \textit{any} (e.g., non-linearizable) finite subgroup of automorphisms of $\Weyl$. Then, if $M$ is any finitely generated $\Weyl*G$-module, $\GKdim \, M \geq n$. Modules with minimal Gelfand-Kirillov dimension exist.
\end{proposition}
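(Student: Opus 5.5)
The plan is to pass from $\Weyl*G$ to $\Weyl$ by restriction of scalars, and then apply the classical Bernstein inequality. The ring $\Weyl*G$ is a free left (and right) $\Weyl$-module with basis $\{g : g\in G\}$, so it is finitely generated as an $\Weyl$-module. Let $M$ be a finitely generated $\Weyl*G$-module. Restricting along the inclusion $\theta:\Weyl\hookrightarrow \Weyl*G$, we get $\GKdim_{\Weyl*G} M=\GKdim_{\Weyl} M$ by Proposition \ref{basic-GK-1}. The equality case applies precisely because $\Weyl*G$ is finitely generated over $\Weyl$.

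Next we check that $M$ is finitely generated as an $\Weyl$-module. If $m_1,\dots,m_k$ generate $M$ over $\Weyl*G$, then the finite set $\{g m_i\}$ generates $M$ over $\Weyl$, since $\Weyl*G=\sum_g \Weyl g$. Hence the Bernstein inequality for $\Weyl$ applies and gives $\GKdim_{\Weyl} M\geq n$, so $\GKdim M\geq n$. We must also rule out $M=0$ (or interpret the statement for nonzero $M$); nonzero modules have the property that the above holds. The same argument uses nothing about $G$ beyond finiteness, so no linearizability is needed.

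For existence, the natural candidate is the induced module $N=\Weyl*G\otimes_{\Weyl}\KK[x_1,\dots,x_n]$, or equivalently any nonzero quotient of the form $\Weyl*G\otimes_{\Weyl} L$ with $L$ holonomic. As a left $\Weyl$-module, $N=\bigoplus_{g} g\otimes L$, and each summand $g\otimes L$ is the twist $L^{g}$ of $L$ by the automorphism $g$. Twisting by an automorphism preserves Gelfand-Kirillov dimension: it only changes the frame to $g(V)$, which is another frame. So $N$ is a finite direct sum of modules of GK dimension $n$. By Proposition \ref{growth}(2) it has $\GKdim_{\Weyl} N=n$, and hence $\GKdim_{\Weyl*G}N=n$ by Proposition \ref{basic-GK-1} again. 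The module $N$ is finitely generated over $\Weyl*G$ (by $1\otimes 1$) and nonzero.

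The main subtlety is the case of a non-linear $G$. There the summand $g\otimes L$ is a genuinely twisted module and need not be isomorphic to $L$. The observation that twisting by an automorphism preserves GK dimension, because the automorphism carries frames to frames, is what handles this. Everything else is a direct application of Proposition \ref{basic-GK-1}.
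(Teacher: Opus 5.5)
Your argument is correct. The inequality half coincides with the paper's proof: restrict to $\Weyl$, use that $\Weyl*G=\bigoplus_{g}\Weyl g$ is finite over $\Weyl$ so that Proposition~\ref{basic-GK-1} gives equality of GK dimensions, then apply the classical Bernstein inequality. You are slightly more careful than the paper in checking that $M$ remains finitely generated over $\Weyl$.

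For existence you take a genuinely different route. The paper uses $\KK[\bar{X}]$ itself ``with $G$ acting trivially''. Read literally, this is not a $\Weyl*G$-module for $G\neq 1$: the relation $g\cdot(am)=g(a)\,(g\cdot m)$ would force $a-g(a)$ to annihilate the faithful module $\KK[\bar{X}]$. The intended reading is presumably the natural action of a linear $G\subset\GL_n(\KK)$ on polynomials. That gives a \emph{simple} holonomic $\Weyl*G$-module with no extra work. However, it only works when $\KK[\bar{X}]$ is isomorphic to all of its twists by elements of $G$, and this can fail. For example, take the Fourier automorphism $x\mapsto\partial$, $\partial\mapsto -x$ of $A_1(\KK)$. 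In the twist of $\KK[x]$, $\partial$ acts injectively, whereas $\partial\cdot 1=0$ in $\KK[x]$, so $\KK[x]$ carries no compatible $G$-action.

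Your induced module $\Weyl*G\otimes_{\Weyl}\KK[\bar{X}]$ is a finite direct sum of twists of $\KK[\bar{X}]$. Combined with your observation that twisting by an automorphism preserves GK dimension (it only replaces the frame $V$ by $g(V)$), this works for every finite $G$. That is the generality the statement actually claims (``any, e.g.\ non-linearizable'' $G$). The cost is one small extra lemma and a module that need not be simple, which the statement does not require.
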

\begin{proof}
    If $M$ is a finitely generated module for $\Weyl*G$, in particular it is one for $\Weyl$, hence by the usual Bernstein inequality and Proposition \ref{basic-GK-1}, $\GKdim_{\Weyl*G} \, M = \GKdim_{\Weyl} M \geq n$. Let $\KK[\bar{X}]$ be the polynomial algebra in $n$ variables, with $\Weyl$ acting as usual as differential operators with polynomial coefficients, and $G$ acting trivially. Then, again by Proposition \ref{basic-GK-1}, $\GKdim_{\Weyl*G} \KK[\bar{X}]=\GKdim_{\Weyl} \KK[\bar{X}]=n$.
\end{proof}
\begin{proposition}\label{Bernstein-invariants}
Let $G$ be any finite subgroup of automorphisms of $\Weyl$. If $M$ is any finitely generated $\Invar$-module, $\GKdim \, M \geq n$, and modules with minimal Gelfand-Kirillov dimension exist.
\end{proposition}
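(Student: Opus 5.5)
We deduce the statement from Proposition \ref{Bernstein-skew-product-ring} by transporting Gelfand-Kirillov dimension across the Morita equivalence of Corollary \ref{Morita-invariants-Weyl}, using Theorem \ref{GK-is-Morita-for-modules}.

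First we make the equivalence explicit. Put $R=\Weyl$ and let $e=|G|^{-1}\sum_{g\in G}g\in R*G$ (here $|G|^{-1}\in\KK$ since $\operatorname{char}\KK=0$). By Lemma \ref{most-basic-lemma}, $e(R*G)e\simeq \Invar$. By Theorem \ref{essential-invariant} and Proposition \ref{Weyl-outer}, $R*G$ is simple: it is Morita equivalent to the simple ring $\Invar$, and simplicity is a Morita invariant. Hence $(R*G)e(R*G)=R*G$, so $e$ is a full idempotent. The equivalence is therefore
\[
F(N)=eN,\qquad G'(M)=(R*G)e\otimes_{e(R*G)e}M,
\]
between $R*G$-modules $N$ and $\Invar$-modules $M$. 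Morita equivalences preserve finite generation, because it is the categorical property of being a compact object among modules.

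For the inequality, let $M$ be a finitely generated $\Invar$-module. Then $G'(M)$ is a finitely generated $R*G$-module with $F(G'(M))\cong M$. Theorem \ref{GK-is-Morita-for-modules} gives $\GKdim_{\Invar}M=\GKdim_{R*G}G'(M)$, and Proposition \ref{Bernstein-skew-product-ring} gives $\GKdim_{R*G}G'(M)\ge n$. So $\GKdim_{\Invar}M\ge n$.

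For existence, take the module $\KK[\bar X]$ from the proof of Proposition \ref{Bernstein-skew-product-ring}, on which $G$ acts trivially; it has $\GKdim_{R*G}\KK[\bar X]=n$. Then $F(\KK[\bar X])=e\KK[\bar X]$ is a finitely generated $\Invar$-module, and Theorem \ref{GK-is-Morita-for-modules} shows it has Gelfand-Kirillov dimension $n$. We must check that this module is nonzero. This is automatic, since an equivalence of categories sends nonzero objects to nonzero objects. Concretely, $e$ acts as the identity on $\KK[\bar X]$ because $G$ acts trivially, so $e\KK[\bar X]=\KK[\bar X]$, viewed as an $\Invar$-module by restriction along $\Invar\subset\Weyl$.

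The point needing most care is matching the abstract functors in Theorem \ref{GK-is-Morita-for-modules} with the explicit ones above. If that theorem is read as applying to an arbitrary equivalence, nothing more is needed. We can also avoid this step entirely in the existence part. By Theorem \ref{MS}, $\Weyl$ is finitely generated over $\Invar$, so Proposition \ref{basic-GK-1} directly gives $\GKdim_{\Invar}\KK[\bar X]=\GKdim_{\Weyl}\KK[\bar X]=n$.
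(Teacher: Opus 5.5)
Your proof of the inequality is the paper's proof: you transport Proposition \ref{Bernstein-skew-product-ring} across the Morita equivalence of Corollary \ref{Morita-invariants-Weyl} using Theorem \ref{GK-is-Morita-for-modules}, with the equivalence $N\mapsto eN$ written out explicitly. That part is fine.

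The existence part, however, contains a false step: $G$ cannot act trivially on $\KK[\bar X]$ compatibly with the usual $\Weyl$-action unless $G=1$. In $\Weyl*G$ one has $gr=g(r)g$. A trivial action would therefore force $r\cdot m=g(r)\cdot m$ for all $r\in\Weyl$ and $m\in\KK[\bar X]$, so $r-g(r)$ would annihilate $\KK[\bar X]$. But $\KK[\bar X]$ is a faithful $\Weyl$-module, since $\Weyl$ is simple, and hence $g(r)=r$ for all $r$. The same slip occurs in the proof of Proposition \ref{Bernstein-skew-product-ring}, which is where you took it from.

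Consequently your ``concrete'' identification $e\KK[\bar X]=\KK[\bar X]$ is wrong. For a linear group acting naturally on polynomials one gets $e\KK[\bar X]=\KK[\bar X]^G$, which is the module of the Example following the definition of holonomic modules. For an arbitrary finite $G$ you should instead take $N=\Weyl*G\otimes_{\Weyl}\KK[\bar X]$. This has Gelfand-Kirillov dimension $n$ by Proposition \ref{basic-GK-1} (cf.\ Proposition \ref{g-skew}), because $\Weyl*G$ is free of rank $|G|$ over $\Weyl$.

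The conclusion survives. Your abstract argument is valid once you delete the concrete description and use such an $N$: an equivalence sends a nonzero finitely generated $\Weyl*G$-module of dimension $n$ to a nonzero module of the same dimension. Your closing fallback is also correct for every finite $G$: restrict $\KK[\bar X]$ to $\Invar$ and apply Theorem \ref{MS} together with Proposition \ref{basic-GK-1}. This is exactly the argument of Proposition \ref{g-inv}. For that fallback you should also record that $\KK[\bar X]=\Weyl\cdot 1$ is finitely generated over $\Invar$, which holds because $\Weyl$ is a finitely generated left $\Invar$-module.
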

\begin{proof}
    Use the previous Proposition \ref{Bernstein-skew-product-ring}, together with Corollary \ref{Morita-invariants-Weyl} and Theorem \ref{GK-is-Morita-for-modules}.
\end{proof}
\begin{definition}
    Let $A$ be $\Weyl*G$ or $\Invar$. If an $A$-module $M$ has Gelfand-Kirillov dimension $n$, it is called holonomic.
\end{definition}
\begin{example}
    As a more interesting example, we will show that $\k[\bar{X}]^G$ is an irreducible holonomic $\Invar$-module. It is indeed an $\Invar$-module (Theorem \ref{Knop}). Moreover, $\k[\bar{X}]^G$ is an $\Invar$-submodule of $\k[\bar{X}]$, which has Gelfand-Kirillov dimension $n$ by Theorem \ref{essential-invariant} together with Proposition \ref{basic-GK-1}. Since $\k[\bar{X}]^G$ is a nonzero $\Invar$-submodule, by Proposition \ref{Bernstein-invariants} its Gelfand-Kirillov dimension is also $n$. It is an irreducible module, again by Theorem \ref{Knop}.
\end{example}
\begin{definition}
    Let $\G(j)$, where $j \in [n, 2n]$, be the set of isomorphism classes of $\Weyl$-modules with Gelfand-Kirillov dimension $j$.
\end{definition}
By \cite[Exercise 9.5.3]{Coutinho}, $\G(j) \neq \emptyset$ for every $j \in [n, 2n]$. Denote by $\Ginv(j)$ the set of isomorphism classes of $\Invar$-modules with Gelfand-Kirillov dimension $j$, and $\Gskew(j)$ the set of isomorphism classes of $\Weyl*G$-modules with Gelfand-Kirillov dimension $j$.
\begin{proposition}\label{g-skew}
    If $M \in \G(j)$, then $M^*=\Weyl*G \otimes_{\Weyl} M$ belongs to $\Gskew(j)$. If $M \in \Gskew(j)$, then $M|_{\Weyl} \in \G(j)$.
\end{proposition}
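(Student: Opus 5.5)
The plan is to reduce both claims to Proposition \ref{basic-GK-1}, using the fact that $\Weyl*G$ is a finitely generated (indeed free) $\Weyl$-module. This freeness holds because $\Weyl*G=\bigoplus_{g\in G}\Weyl g$ as a left $\Weyl$-module, and also as a right $\Weyl$-module since $g a = a^{g} g$.

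\emph{Second claim.} Let $M$ be a finitely generated $\Weyl*G$-module with $\GKdim M=j$. Restricting along the inclusion $\theta:\Weyl\to\Weyl*G$, the module $M|_{\Weyl}$ is still finitely generated: if $F$ generates $M$ over $\Weyl*G$, then $\sum_{g\in G} gF$ is finite dimensional and generates $M$ over $\Weyl$. Since $\Weyl*G$ is finitely generated as an $\Weyl$-module via $\theta$, Proposition \ref{basic-GK-1} gives $\GKdim_{\Weyl} M=\GKdim_{\Weyl*G}M=j$. Hence $M|_{\Weyl}\in\G(j)$.

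\emph{First claim.} Let $M\in\G(j)$ be finitely generated and set $M^*=\Weyl*G\otimes_{\Weyl}M$. Because $\Weyl*G$ is free of rank $|G|$ as a right $\Weyl$-module with basis $\{g\}_{g\in G}$, we have the decomposition
\[
M^*|_{\Weyl}=\bigoplus_{g\in G} g\otimes M .
\]
Each summand $g\otimes M$ is an $\Weyl$-submodule. Indeed $a(g\otimes m)=g\otimes a^{g^{-1}}m$ (with the appropriate sign convention for the action), so $g\otimes M$ is isomorphic to the twist ${}^{\sigma}M$ of $M$ by the automorphism $\sigma=g^{-1}$.

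The twisting step needs a short justification. Twisting by an automorphism $\sigma$ does not change the Gelfand--Kirillov dimension. To see this, take a frame $V$ of $\Weyl$; then $\sigma(V)$ is again a frame. Computing $\dim V^nF$ in ${}^{\sigma}M$ amounts to computing $\dim\sigma(V)^nF$ in $M$, and by the independence of the frame in the definition of GK dimension this gives the same value.

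The GK dimension of a finite direct sum is the maximum of the GK dimensions of the summands. This follows from the standard exact-sequence property, or directly from Proposition \ref{growth}(2) applied to the sum of the growth functions. Therefore
\[
\GKdim_{\Weyl}M^*=j .
\]
Moreover, $M^*$ is finitely generated over $\Weyl*G$, since it is generated by $1\otimes F$. Applying Proposition \ref{basic-GK-1} once more yields $\GKdim_{\Weyl*G}M^*=j$, so $M^*\in\Gskew(j)$.

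The only points requiring care are the invariance of GK dimension under twisting and its behaviour on finite direct sums; both are routine. I expect no genuine obstacle beyond keeping the left and right module structures straight in the decomposition of $M^*$.
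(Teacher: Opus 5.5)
Your proof is correct and takes the same route as the paper, which declares both claims immediate from Proposition~\ref{basic-GK-1}. The paper leaves implicit the equality $\GKdim_{\Weyl} M^* = \GKdim_{\Weyl} M$, and your decomposition of $M^*|_{\Weyl}$ into the twisted copies $g\otimes M$ of $M$ supplies exactly the justification for it.
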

\begin{proof}
    Immediate from Proposition \ref{basic-GK-1}.
\end{proof}
\begin{proposition}\label{g-inv}
    If $M \in \G(j)$, then $M|_{\Invar} $ belongs to $\Ginv(j)$. If $M \in \Ginv(j)$, then $\Weyl \otimes_{\Invar} M$ belongs to $\G(j)$.
\end{proposition}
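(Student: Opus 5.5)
The plan is to handle the two statements separately, using Proposition \ref{basic-GK-1} for the first and a Morita argument through $\Weyl*G$ for the second. Throughout, modules are finitely generated, as is implicit in the definition of Gelfand-Kirillov dimension used here.

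For the first statement, let $M \in \G(j)$. I regard $M$ as an $\Invar$-module via the inclusion $\Invar \hookrightarrow \Weyl$. By Theorem \ref{MS}, $\Invar$ is affine and Noetherian, and $\Weyl$ is a finitely generated $\Invar$-module. Since $M$ is finitely generated over $\Weyl$, it is therefore finitely generated over $\Invar$. The equality case of Proposition \ref{basic-GK-1}, applied to $\theta:\Invar\to\Weyl$, then gives $\GKdim_{\Invar} M = \GKdim_{\Weyl} M = j$. This proves $M|_{\Invar}\in\Ginv(j)$.

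For the second statement, let $M\in\Ginv(j)$ and set $N=\Weyl\otimes_{\Invar} M$.

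\textbf{Finite generation.} $N$ is a finitely generated $\Weyl$-module, since $M$ is finitely generated over $\Invar$.

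\textbf{Passing to the skew group ring.} By Lemma \ref{most-basic-lemma}, $\Weyl \simeq e(\Weyl*G)$ as $(\Invar,\Weyl*G)$-bimodules, where $\Invar\simeq e(\Weyl*G)e$. Equivalently, on the other side, $\Weyl\simeq(\Weyl*G)e$ as a $(\Weyl*G,\Invar)$-bimodule. Hence
\[N\simeq (\Weyl*G)e\otimes_{e(\Weyl*G)e} M.\]
This is exactly the Morita functor $\Invar\text{-}\Mod\to\Weyl*G\text{-}\Mod$ of Corollary \ref{Morita-invariants-Weyl}. The identification is legitimate because $(\Weyl*G)e(\Weyl*G)=\Weyl*G$, which holds since $\Weyl*G$ is simple: $G$ acts by outer automorphisms by Proposition \ref{Weyl-outer}, and simplicity of $R*G$ in that case is part of the argument behind Theorem \ref{essential-invariant}.

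\textbf{Computing the dimension.} Theorem \ref{GK-is-Morita-for-modules} gives $\GKdim_{\Weyl*G} N = \GKdim_{\Invar} M = j$. Proposition \ref{g-skew} (equivalently, Proposition \ref{basic-GK-1} with $\Weyl*G$ finite over $\Weyl$) then gives $\GKdim_{\Weyl} N = j$, so $N\in\G(j)$.

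The main obstacle is the bimodule identification $\Weyl\otimes_{\Invar}M\simeq (\Weyl*G)e\otimes_{e(\Weyl*G)e}M$. It requires checking that the map $a\mapsto ae$ from $\Weyl$ to $(\Weyl*G)e$ is an isomorphism compatible with both the left $\Weyl$-action and the right $\Invar\simeq e(\Weyl*G)e$-action.

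If that route is awkward, a fallback gives a direct bound. The natural map $M\to N$ is injective, since $\Invar$ is a direct summand of $\Weyl$ as an $\Invar$-bimodule via the Reynolds operator $\tau/|G|$. Choose a finite generating space $F$ of $M$, and a frame $V\supseteq W$ of $\Weyl$ with $W$ a frame of $\Invar$ and $\Weyl=\sum_i \Invar\, u_i$, where the $u_i$ are finitely many module generators. Then $\GKdim_\Weyl N \le \GKdim_{\Invar}M$, because $V^nF\subseteq \sum_i W^{cn}u_i F$ up to reordering. The reverse inequality $\GKdim_{\Weyl} N\ge \GKdim_{\Invar}N \ge\GKdim_{\Invar} M$ follows from the first part applied to $N$ together with the injectivity $M\hookrightarrow N$. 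This reordering estimate is where the real work lies, which is why I prefer the Morita route.
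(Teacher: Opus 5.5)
Your first half is exactly the paper's argument. $\Weyl$ is finite over $\Invar$ (Theorem~\ref{MS}), so the equality case of Proposition~\ref{basic-GK-1} applies.

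For the second half your route genuinely differs from the paper's. The paper calls it ``immediate from Proposition~\ref{basic-GK-1}''. Strictly, that proposition applied to $\Invar\subseteq\Weyl$ only gives $\GKdim_{\Weyl}N=\GKdim_{\Invar}N$ for $N=\Weyl\otimes_{\Invar}M$. The remaining equality $\GKdim_{\Invar}N=\GKdim_{\Invar}M$ is left tacit. The natural way to supply it is your fallback:
\begin{itemize}
\item an upper bound from finite generation of $\Weyl$ as a right $\Invar$-module;
\item a lower bound from the splitting $\Weyl=\Invar\oplus\ker\tau$ of $\Invar$-bimodules, which makes $M$ an $\Invar$-direct summand of $N$.
\end{itemize}
Your main argument instead identifies $\Weyl\otimes_{\Invar}-$ with the Morita functor $(\Weyl*G)e\otimes_{e(\Weyl*G)e}-$ followed by restriction to $\Weyl$. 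It then concludes via Theorem~\ref{GK-is-Morita-for-modules} and Proposition~\ref{g-skew}. This is correct, and the identification you flagged as the obstacle is routine:
\begin{itemize}
\item $r\mapsto re$ is injective, since the identity component of $re$ is $r/|G|$;
\item $(sg)(re)=s\,g(r)e$ and $(re)(ae)=rae$ for $a\in\Invar$.
\end{itemize}
So restricting to $\Weyl$ recovers left multiplication. Your route uses only results already stated in the paper and mirrors the proof of Proposition~\ref{Bernstein-invariants}. Its cost is reliance on the nontrivial Bavula--Hinchcliffe theorem and on fullness of $e$, whereas the direct estimate is elementary and self-contained.

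One correction to the fallback, should you use it. The module generators must be on the right: $\Weyl=\sum_i u_i\Invar$ with $u_1=1$, available by the left--right symmetric form of Theorem~\ref{MS}. Then $N=\sum_i u_i\otimes M$. With left generators $\Invar u_i$, nothing can be moved across the tensor sign.

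Now enlarge a frame of $\Invar$ to a frame $W$ with $Vu_i\subseteq\sum_j u_jW$ for all $i$. Induction gives $V^n(1\otimes F)\subseteq\sum_j u_j\otimes W^nF$, hence $\dim V^n(1\otimes F)\le m\dim W^nF$. So the estimate takes two lines; it is not ``where the real work lies.''
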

\begin{proof}
    Immediate from Proposition \ref{basic-GK-1} again.
\end{proof}
So for every $j \in [n,2n]$, $\Ginv(j)$ and $\Gskew(j)$ are non-empty.
\begin{definition}
    We denote by $\H*G$, $\H$ and $\H^G$ the categories of holonomic modules (plus the zero module) for $\Weyl*G$, $\Weyl$, and $\Invar$, respectively.
\end{definition}
\begin{proposition}
 Let $G$ be any finite group of automorphisms of $\Weyl$. The functor $\operatorname{Ind}^G: \mathcal{H}^G \rightarrow \mathcal{H}$, which sends $M$ to $\Weyl \otimes_{\Invar} \, M$, is left adjoint to the functor $\operatorname{Res}_G: \mathcal{H} \rightarrow \mathcal{H}^G$ given by restriction of scalars from $\Weyl$ to $\Invar$. Both $\operatorname{Ind}^G$ and $\operatorname{Res}_G$ are exact functors.
\end{proposition}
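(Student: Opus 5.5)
We need three things: the functors land in the stated categories, the adjunction holds, and both functors are exact. The plan is to take each in turn.

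\emph{Well-definedness.} Let $M \in \mathcal{H}^G$, so $M$ is a finitely generated $\Invar$-module with $\GKdim\, M = n$ (or $M=0$). By Theorem \ref{MS}, $\Weyl$ is a finitely generated $\Invar$-module on either side. Hence $\Weyl \otimes_{\Invar} M$ is a finitely generated $\Weyl$-module. By Proposition \ref{g-inv} its Gelfand-Kirillov dimension is $n$, or it is zero; so $\operatorname{Ind}^G M \in \mathcal{H}$. Conversely, let $N \in \mathcal{H}$. Since $\Weyl$ is finitely generated as a left $\Invar$-module, $N$ is finitely generated over $\Invar$. By Proposition \ref{basic-GK-1} (equality case, with $\theta$ the inclusion), $\GKdim_{\Invar} N = \GKdim_{\Weyl} N = n$. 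So $\operatorname{Res}_G N \in \mathcal{H}^G$. Both functors obviously send morphisms to morphisms.

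\emph{Adjunction.} This is the standard tensor--hom adjunction for the ring inclusion $\Invar \subset \Weyl$, with $\Weyl$ viewed as an $(\Weyl,\Invar)$-bimodule. For $M \in \mathcal{H}^G$ and $N \in \mathcal{H}$ we have the natural isomorphism
\[ \operatorname{Hom}_{\Weyl}(\Weyl\otimes_{\Invar} M, N) \cong \operatorname{Hom}_{\Invar}(M, \operatorname{Res}_G N), \qquad \varphi \mapsto (m \mapsto \varphi(1\otimes m)). \]
Because $\mathcal{H}$ and $\mathcal{H}^G$ are full subcategories and both functors preserve them, the adjunction restricts to these subcategories.

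\emph{Exactness.} $\operatorname{Res}_G$ is exact because restriction of scalars does not change underlying maps. For $\operatorname{Ind}^G$ we need $\Weyl$ to be flat as a right $\Invar$-module. Theorem \ref{essential-invariant}(2) applies: $\Weyl$ is simple and, by Proposition \ref{Weyl-outer}, $G$ acts by outer automorphisms. It gives that $\Weyl$ is a finitely generated projective $\Invar$-module. That statement is for one side, so the main obstacle is making sure we have projectivity on the right.

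I would obtain the right-module version by the symmetric argument. The idempotent $e$ from Lemma \ref{most-basic-lemma} satisfies $e(\Weyl*G)=e\Weyl \cong \Weyl$ and $e(\Weyl*G)e \cong \Invar$. Since $\Weyl*G$ is simple, $(\Weyl*G)e(\Weyl*G)=\Weyl*G$. Morita theory then shows that $e(\Weyl*G)$ is a finitely generated projective right $e(\Weyl*G)e$-module, which means $\Weyl$ is projective as a right $\Invar$-module. A more direct route is the Reynolds-type splitting $\tfrac{1}{|G|}\tau$, which gives that $\Invar$ is a direct summand of $\Weyl$ as a bimodule. That alone does not give flatness, so I would rely on the Morita argument.

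Projective modules are flat, so $\Weyl\otimes_{\Invar} -$ is exact on all modules, and in particular on $\mathcal{H}^G$. Short exact sequences in $\mathcal{H}^G$ stay in it, since submodules and quotients of holonomic modules are holonomic by Bernstein's inequality, Proposition \ref{Bernstein-invariants}.
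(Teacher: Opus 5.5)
Your proposal follows the paper's route. $\operatorname{Res}_G$ is trivially exact. $\operatorname{Ind}^G$ is exact because $\Weyl$ is projective, hence flat, over $\Invar$ (Theorem \ref{essential-invariant}). Holonomicity is preserved by Proposition \ref{g-inv} and Proposition \ref{basic-GK-1}. The adjunction is the usual tensor--hom adjunction, which the paper leaves implicit. You are right that exactness of $\Weyl\otimes_{\Invar}-$ needs $\Weyl$ flat as a \emph{right} $\Invar$-module, a point the paper does not address.

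However, your argument for that step is on the wrong side. The module $e(\Weyl*G)=e\Weyl$ is an $\bigl(e(\Weyl*G)e,\,\Weyl*G\bigr)$-bimodule. It is not a unital right $e(\Weyl*G)e$-module: right multiplication by $e$, the identity of the corner ring, projects it onto $e(\Weyl*G)e=e\Invar$, a proper subspace once $G\neq 1$. For the full idempotent $e$, Morita theory says $e(\Weyl*G)$ is finitely generated projective as a \emph{left} $e(\Weyl*G)e$-module. Since $(ea)(er)=e(ar)$ for $a\in\Invar$, this gives projectivity of $\Weyl$ as a \emph{left} $\Invar$-module, which is the side irrelevant for $\Weyl\otimes_{\Invar}-$. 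The right $\Invar$-action on $e\Weyl$ you have in mind is right multiplication by $\Invar\subset\Weyl*G$, not an action of the corner ring, so Morita theory says nothing about it.

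The repair is to use the other corner. Since $ge=e$ for $g\in G$, we have $(\Weyl*G)e=\Weyl e$. The map $r\mapsto re$ is an isomorphism of $(\Weyl,\Invar)$-bimodules $\Weyl\to\Weyl e$:
\begin{itemize}
\item it is injective because $\Weyl*G=\bigoplus_{g}\Weyl g$;
\item it respects the right actions, since $(re)(ea)=(ra)e$ for $a\in\Invar$, using $ea=ae$.
\end{itemize}
As $(\Weyl*G)e(\Weyl*G)=\Weyl*G$, the module $(\Weyl*G)e$ is a progenerator as a right $e(\Weyl*G)e$-module. Hence $\Weyl$ is finitely generated projective as a right $\Invar$-module.

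Alternatively, apply Theorem \ref{essential-invariant} to $\Weyl^{\mathrm{op}}$. It is again simple, $G$ still acts on it by outer automorphisms, and $(\Weyl^{\mathrm{op}})^G=(\Invar)^{\mathrm{op}}$.

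The identification also gives more. It shows $\operatorname{Ind}^G M\cong(\Weyl*G)e\otimes_{e(\Weyl*G)e}M$ is the Morita equivalence functor followed by restriction to $\Weyl$. This yields exactness directly, and, via Theorem \ref{GK-is-Morita-for-modules} and Proposition \ref{basic-GK-1}, the preservation of Gelfand--Kirillov dimension. With this change the rest of your argument goes through.
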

\begin{proof}
    $\operatorname{Ind}^G$ is exact because $\Weyl$ is a projective, hence flat, $\Invar$-module (Theorem \ref{essential-invariant}). The exactness of $\operatorname{Res}_G$ is obvious. The categories of holonomic modules are preserved by Proposition \ref{g-inv}.
\end{proof}
\begin{theorem}
    The Morita equivalence between $\Weyl*G$ and $\Invar$ induces an equivalence between the abelian categories $\H*G$ and $\H^G$.
\end{theorem}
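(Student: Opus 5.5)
We will make the Morita equivalence explicit and then check, using Theorem \ref{GK-is-Morita-for-modules}, that it preserves Gelfand-Kirillov dimension. Put $R=\Weyl$ and $e=|G|^{-1}\sum_{g\in G}g\in R*G$. By Lemma \ref{most-basic-lemma}, $e(R*G)e\simeq \Invar$, and $(R*G)e(R*G)$ is a two-sided ideal of $R*G$. Since $R*G$ is simple (Corollary \ref{Morita-invariants-Weyl} combined with Theorem \ref{essential-invariant}, or directly because all elements of $G$ are outer by Proposition \ref{Weyl-outer}), this ideal is the whole ring, so $e$ is a full idempotent. Consequently the functors
\[
F\colon M\longmapsto eM\cong e(R*G)\otimes_{R*G}M,\qquad G'\colon N\longmapsto (R*G)e\otimes_{\Invar}N
\]
are mutually quasi-inverse equivalences between $R*G\text{-}\Mod$ and $\Invar\text{-}\Mod$. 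This is the Morita equivalence of Corollary \ref{Morita-invariants-Weyl}.

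Next we check that both functors preserve finite generation. An equivalence of module categories preserves finitely generated objects, since these are categorically characterized, for instance as the compact objects with respect to directed unions of subobjects. Alternatively one can argue directly: if $M$ is generated by $m_1,\dots,m_k$, then $eM=e(R*G)M$ is generated over $e(R*G)e$ by the elements $e\,x\,m_i$, where $x$ runs over a finite set of generators of $e(R*G)$ as a right $e(R*G)e$-module. Such a finite set exists by Theorem \ref{MS}, because $R$ is finitely generated over $R^G$.

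With finite generation in hand, Theorem \ref{GK-is-Morita-for-modules} gives $\GKdim_{\Invar} F(M)=\GKdim_{R*G}M$ for every finitely generated $M$. Applying the same theorem with the roles of the two algebras swapped gives $\GKdim_{R*G}G'(N)=\GKdim_{\Invar}N$. Therefore $F$ sends $\H*G$ into $\H^G$ and $G'$ sends $\H^G$ into $\H*G$. The zero module corresponds to itself. Both are full subcategories closed under the natural isomorphisms $G'F\cong \mathrm{id}$ and $FG'\cong\mathrm{id}$, so the restrictions of $F$ and $G'$ are quasi-inverse equivalences $\H*G\simeq\H^G$.

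It remains to show that $\H*G$ and $\H^G$ are abelian, and that the equivalence, being additive, respects this structure. Let $0\to M'\to M\to M''\to 0$ be exact with $M$ holonomic. Since $R*G$ and $\Invar$ are Noetherian, $M'$ and $M''$ are finitely generated. Their dimensions are at most $\GKdim M$ by the standard behaviour of $\GKdim$ on submodules and quotients, and at least $n$ by Propositions \ref{Bernstein-skew-product-ring} and \ref{Bernstein-invariants} unless they are zero. So $M'$ and $M''$ are again holonomic or zero. The restricted functors are still exact, and an equivalence between full abelian subcategories is automatically an equivalence of abelian categories.

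The main obstacle is making sure that Theorem \ref{GK-is-Morita-for-modules} applies to the specific functors $F$ and $G'$ and that finite generation is preserved; once fullness of $e$ is established, both points follow from the argument above.
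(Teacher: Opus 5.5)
Your argument is correct and is the paper's argument made explicit: the paper's proof just invokes the Morita equivalence of Corollary~\ref{Morita-invariants-Weyl}, with holonomicity preserved by the Morita invariance of $\GKdim$ (Theorem~\ref{GK-is-Morita-for-modules}), which is exactly what you carry out via the full idempotent $e$. One small slip is in your optional direct finite-generation argument: $e(R*G)$ is an $(e(R*G)e,\,R*G)$-bimodule, so you need finitely many generators of $e(R*G)=eR\cong R$ as a \emph{left} module over $e(R*G)e\cong R^G$, not as a right one; Theorem~\ref{MS} supplies these.
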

\begin{proof}
    An immediate consequence of Corollary \ref{Morita-invariants-Weyl}.
\end{proof}
The following theorem summarizes the main results of \cite{FS4} on the invariants of the Weyl algebra $\Invar$, for $G < \GL_n(\k)$.
\begin{theorem}\label{main-properties-holonomic-modules}
    \begin{enumerate}
\item $\H^G$ is an abelian, extension-closed subcategory of $\Invar\text{-}\operatorname{Mod}$.
\item Every holonomic module for $\Invar$ is cyclic.
\item Every holonomic module for $\Invar$ is torsion.
\item If $n=1$, every irreducible module for $A_1(\KK)^G$ is holonomic, and a module is holonomic if and only if it is torsion.
\end{enumerate}
\end{theorem}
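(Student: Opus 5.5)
The statement collects four results from \cite{FS4}, so the plan is to reduce each item to the Weyl algebra through the restriction and induction functors and Propositions \ref{basic-GK-1}, \ref{g-inv}, and to the skew group ring through the Morita equivalence of Corollary \ref{Morita-invariants-Weyl} together with Theorem \ref{GK-is-Morita-for-modules}.

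\emph{Item (1).} Take a short exact sequence $0\to M'\to M\to M''\to 0$ of finitely generated $\Invar$-modules. Apply the exact functor $\Weyl\otimes_{\Invar}-$; it is exact because $\Weyl$ is projective over $\Invar$ by Theorem \ref{essential-invariant}. By Proposition \ref{g-inv} the induced module has the same GK dimension as the original. The holonomic category $\H$ for $\Weyl$ is abelian and closed under extensions, since for the Weyl algebra $\GKdim M=\max\{\GKdim M',\GKdim M''\}$ via Bernstein filtrations. Therefore $M$ is holonomic if and only if $M'$ and $M''$ are, which gives closure under submodules, quotients and extensions.

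\emph{Item (2).} I would argue through the skew group ring. Holonomic $\Weyl$-modules have finite length, and in fact every $\Weyl$-module of finite length is cyclic because $\Weyl$ is a simple Noetherian ring of infinite length. The same holds for $\Weyl*G$: it is simple Noetherian, being Morita equivalent to the simple ring $\Invar$, and it is not Artinian. Holonomic $\Weyl*G$-modules have finite length because their restriction to $\Weyl$ does. The Stafford-type result that finite-length modules over simple Noetherian rings which are not Artinian are cyclic then applies. The same reasoning works directly for $\Invar$, which is simple and Noetherian by Corollary \ref{Morita-invariants-Weyl} and of infinite length. So the main point is to show that holonomic $\Invar$-modules have finite length. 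For this I would use the Morita equivalence: a holonomic $\Invar$-module $M$ corresponds to a holonomic $\Weyl*G$-module $N$, and $N|_{\Weyl}$ is holonomic, hence of finite length, hence $N$ has finite length.

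\emph{Item (3).} A holonomic module $M$ satisfies $\GKdim M=n<2n=\GKdim\Invar$. In a prime Noetherian domain, a torsion-free finitely generated module embeds in a free module of finite rank and contains a copy of a nonzero left ideal, so its GK dimension would be $2n$. Hence $M$ must be torsion. Here I would use that the Ore domain $\Invar$ has the property that nonzero left ideals have full GK dimension, which holds via finite generation over it of $\Weyl$.

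\emph{Item (4).} When $n=1$, the dimension range is $[1,2]$. A finitely generated torsion module over the domain $A_1(\KK)^G$ has dimension strictly less than $2$, and GK dimensions of modules over $A_1$ are integers (Bernstein filtration, transferred by Proposition \ref{g-inv}), so torsion implies holonomic. Combined with (3), holonomic is equivalent to torsion. An irreducible module $M=R/I$ with $I\neq 0$ is torsion, hence holonomic. If $I=0$, then $R$ itself would be simple as a module, which is impossible since $R$ is not a division ring.

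The main obstacle I expect is item (2): it requires both the finite-length transfer through the Morita equivalence and the cyclicity theorem for simple Noetherian rings. The other items are bookkeeping with Proposition \ref{basic-GK-1}.
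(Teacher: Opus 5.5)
The paper gives no proof of this statement. It records it as a summary of results from \cite{FS4}, whose approach is based on the filter dimension of $\Invar$.

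Your proposal is correct in substance but takes a genuinely different and more elementary route. You transfer everything from $\Weyl$ through the exact functor $\Weyl\otimes_{\Invar}-$, Proposition \ref{g-inv}, and the Morita equivalence with $\Weyl*G$, in the same spirit as the paper's own proof of Proposition \ref{Bernstein-invariants}. In (1) this in fact shows that $\GKdim$ is exact on finitely generated $\Invar$-modules. In (2), finite length passes from $\Weyl$ to $\Weyl*G$ and then to $\Invar$, because a Morita equivalence preserves submodule lattices; the cyclicity theorem for simple rings of infinite length then applies. Your argument never uses linearity of $G$, so it gives (1)--(4) for every finite $G\subset\Autk\Weyl$. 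The filter-dimension approach, by contrast, is intrinsic to the algebra and needs no reduction to the Weyl algebra.

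\textbf{Missing justification in (4).} You assert, without argument, that a finitely generated torsion $A_1(\KK)^G$-module has $\GKdim<2$. This does not follow from (3), which proves the converse implication. You can supply it with your own tools, for all $n$:
\begin{enumerate}
\item If $M=\sum_i \Invar m_i$ is torsion, pick $0\neq a_i\in\Ann(m_i)$, so each $\Invar m_i$ is a quotient of $\Invar/\Invar a_i$.
\item By right exactness, $\Weyl\otimes_{\Invar}(\Invar/\Invar a)\cong\Weyl/\Weyl a$. For $a\neq 0$ this has GK dimension at most $2n-1$, since its associated graded for the Bernstein filtration is $\KK[x_1,\dots,x_n,\xi_1,\dots,\xi_n]/(\sigma(a))$, with $\sigma(a)$ the principal symbol.
\item Proposition \ref{g-inv} then gives $\GKdim \Invar/\Invar a_i\le 2n-1$, whence $\GKdim M\le 2n-1$.
\end{enumerate}
Alternatively, use the general bound $\GKdim A/Ac\le\GKdim A-1$ for a regular element $c$, from \cite{KL}. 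For $n=1$ this gives $\GKdim M\le 1$ directly, so your integrality remark becomes unnecessary.

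\textbf{Simplification in (3).} The step from ``torsion-free modules have GK dimension $2n$'' to ``$M$ is torsion'' is cleaner stated directly. If $M$ is not torsion, some $m\in M$ has $\Ann(m)=0$, so $\Invar m\cong\Invar$ and $\GKdim M\ge 2n$. The detour through embeddings into free modules and finite generation of $\Weyl$ is not needed.
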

\section{Irreducible non-holonomic modules for fixed rings of the Weyl algebra}
In this section, $\KK$ will be an algebraically closed field of zero characteristic, and we consider modules for $\Invar$, where $G$ is a complex reflection group acting through its natural representation. Our aim is to find irreducible non-holonomic representations for $\Invar$ of Gelfand-Kirillov dimension $2n-1$.
By the Chevalley-Shephard-Todd theorem, $\KK[x_1, \ldots, x_n]^G=\KK[e_1,\ldots,e_n]$, where the $e_i$ are algebraically independent elements. We write $\KK[\bar{E}]$ for $\KK[e_1,\ldots,e_n]$.
\begin{theorem}\label{EFS}
    Let $G$ be any complex reflection group. Then there is a $G$-invariant polynomial $\Delta \in \KK[\bar{X}]$ such that, for some polynomial $f \in \KK[\bar{E}]$,
    \[ \D(\KK[\bar{X}])^G_\Delta \simeq \D(\KK[\bar{E}])_f \simeq \mathcal{D}(\KK[\bar{E}]_f), \] where $f$ is just $\Delta$ expressed as a polynomial in the $e_i$. All these isomorphisms preserve the filtration by order of differential operator.
\end{theorem}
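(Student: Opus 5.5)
The natural choice is $\Delta$ equal to the discriminant: the product of the linear forms defining the reflecting hyperplanes, each raised to a suitable power so that the result is $G$-invariant. Concretely, take $\Delta=\delta^{|G|}$ with $\delta=\prod_H \alpha_H$, or more economically the Jacobian $J=\det(\partial e_i/\partial x_j)$ multiplied by $\prod_H\alpha_H$. Since $\Delta\in\KK[\bar X]^G=\KK[\bar E]$, write $\Delta=f(e_1,\dots,e_n)$. The strategy is to show two things. First, the morphism $\pi:\KK^n\to\KK^n/G=\Spec\KK[\bar E]$ restricts to a Galois étale cover $U=\{\Delta\neq 0\}\to V=\{f\neq 0\}$ with group $G$. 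Second, for such covers $\D(U)^G\simeq\D(V)$ via restriction of operators to invariant functions.

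\textbf{Step 1: the cover is étale and $G$ acts freely.} By Steinberg's theorem, the isotropy group of a point $x$ is generated by the reflections fixing $x$. So $G$ acts freely exactly off the union of reflecting hyperplanes, which is the zero locus of $\Delta$. By the Jacobian criterion, $\pi$ is étale exactly where $J\neq 0$, and $J$ is, up to a scalar, $\prod_H\alpha_H^{e_H-1}$, so it vanishes only on that same union. Hence $\KK[\bar X]_\Delta$ is a Galois extension of $\KK[\bar X]_\Delta^G=\KK[\bar E]_f$ with group $G$. Here we use that localization at an invariant element commutes with taking invariants, by Reynolds averaging.

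\textbf{Step 2: identify the invariant operators.} Localization at $\Delta$ gives $\D(\KK[\bar X])_\Delta=\D(\KK[\bar X]_\Delta)$. Since $\Delta$ is $G$-invariant, we also get $(\D(\KK[\bar X])^G)_\Delta=\D(\KK[\bar X]_\Delta)^G$, again by averaging. By Theorem \ref{Knop} applied to $X=U$, every $G$-invariant operator on $U$ preserves $\KK[U]^G=\KK[\bar E]_f$, so restriction defines a map $\rho:\D(U)^G\to\D(\KK[\bar E]_f)$. We then show $\rho$ is a filtered isomorphism.

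\textbf{Step 3: $\rho$ is a filtered isomorphism.} Since $\pi$ is étale, $\Der(\KK[U])=\KK[U]\otimes_{\KK[V]}\Der(\KK[V])$. Every derivation of $\KK[V]$ therefore lifts uniquely to $U$, and the lift is $G$-invariant by uniqueness. The ring $\D(V)$ is generated by $\KK[V]$ and $\Der\KK[V]$, so this gives a lifting map $\D(V)\to\D(U)^G$ that preserves order.

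Injectivity of $\rho$ is the step I expect to be the main obstacle. It amounts to showing that an invariant operator killing $\KK[U]^G$ is zero. The plan is to pass to associated graded objects. We have $\mathrm{gr}\,\D(U)^G=\KK[T^*U]^G$. Because $G$ acts freely, $T^*U\to T^*V$ is a Galois cover, so $\KK[T^*U]^G=\KK[T^*V]=\mathrm{gr}\,\D(V)$. This shows the lifting map is a filtered isomorphism. Alternatively, one can use that the invariant vector fields $\Der(\KK[U])^G$ form a free $\KK[V]$-module with the lifted basis $\partial/\partial e_i$.

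Once the lifting map is an isomorphism, $\rho$ is its inverse, since restricting a lifted operator to invariants recovers the original one. The final identification $\D(\KK[\bar E])_f\simeq\D(\KK[\bar E]_f)$ is the standard compatibility of differential operators with localization, and it preserves the order filtration.
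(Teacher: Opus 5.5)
Your proposal is correct and follows essentially the argument the paper relies on: its proof of this statement is only a citation of \cite{FS} and \cite{EFOS}, where $\Delta$ is taken to be the invariant discriminant, $\{\Delta\neq 0\}\to\{f\neq 0\}$ is an étale Galois cover with group $G$, and $\D(\KK[\bar X]_\Delta)^G$ is identified with $\D(\KK[\bar E]_f)$ compatibly with the order filtration. The one step worth making explicit is that your lifting map $\D(V)\to\D(U)^G$ is well defined: by uniqueness of lifts the lifted fields $\tilde\partial_{e_i}$ commute and satisfy $[\tilde\partial_{e_i},e_j]=\delta_{ij}$, so the Weyl algebra in the $e_i,\partial_{e_i}$ maps to $\D(U)$, and this extends to its localization $\D(V)$ because $f=\Delta$ is a unit on $U$; after that your associated-graded argument goes through as written.
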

\begin{proof}
    \cite[Proposition 5.1, 5.2, and 5.7]{FS}; \cite[Proof of Theorem 2]{EFOS}.
\end{proof}
Assume $n \geq 2$ (cf. Theorem \ref{main-properties-holonomic-modules}(4)).
Let $X=\Spec \,\KK[\bar{E}]$, $X_f=\Spec \, \KK[\bar{E}]_f$, and $X_\Delta = \Spec \, \KK[\bar{X}]_\Delta$, where $\Delta$ and $f$ are as in Theorem \ref{EFS} above.
By \cite[15.1.24]{McConnell}, $\Omega_\KK \,\mathcal{O}(X_f)$ is a free $\mathcal{O}(X_f)$-module with basis $de_1, \ldots, de_n$, and similarly $\operatorname{Der}_\KK \, \mathcal{O}(X_f)$ is a free module with basis $\partial_{e_1}, \ldots, \partial_{e_n}$. Hence, $X_f$ is a select variety. Clearly $\KK[\bar{E}] \subset \mathcal{O}(X_f)$, and $\partial_{e_i}(e_j)=\delta_{ij}$.
Let us pick a derivation (Theorem \ref{d-simple}) $d$ such that $\KK[\bar{E}]$ is a $d$-simple algebra, and $h \in \KK[\bar{E}]$ such that $\D(\KK[\bar{E}])/\D(\KK[\bar{E}])(d+h)$ is an irreducible $\D(\KK[\bar{E}])$-module of Gelfand-Kirillov dimension $2n-1$ (Theorem \ref{Stafford-generalized}).
Then $\D(X_f)/\D(X_f)(d+h)$ is a simple $\D(X_f)$-module of Gelfand-Kirillov dimension $2n-1$ by Theorem \ref{theorem-select-varieties}.
From now on $\Invar$ will denote $\D(X)^G$.
\begin{lemma}
The element $d+h$ chosen as above belongs to $\Invar$.
\end{lemma}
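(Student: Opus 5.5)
We need to show that $d+h$, a differential operator in $\D(\KK[\bar{E}])$, lies in $\Invar=\D(X)^G$, where here $X$ should be read as the affine space with coordinate ring $\KK[\bar{X}]$. The strategy is to apply Theorem \ref{Knop}. It says $\D(\KK[\bar{X}])^G$ consists exactly of the operators on $\KK[\bar{X}]$ that preserve $\KK[\bar{X}]^G=\KK[\bar{E}]$. So it suffices to realize $d+h$ as an honest differential operator on $\KK[\bar{X}]$ (polynomial coefficients) that maps $\KK[\bar{E}]$ into itself.

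First, $h\in\KK[\bar{E}]\subset\KK[\bar{X}]^G$ acts by multiplication and is clearly a $G$-invariant element of $\Weyl$. So the question reduces to the derivation
\[
d=\partial_{e_1}+\sum_{i\ge2}(e_ia_i+b_i)\partial_{e_i},
\]
with $a_i,b_i\in\KK[e_1]$.

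The obstacle is that $\partial_{e_i}$ is not a polynomial-coefficient operator on $\KK[\bar{X}]$. Write $J=\det(\partial e_j/\partial x_k)$. By the chain rule, $\partial_{x_k}=\sum_j(\partial e_j/\partial x_k)\partial_{e_j}$, so inverting the Jacobian matrix gives
\[
\partial_{e_j}=J^{-1}\sum_k c_{jk}\partial_{x_k},
\]
where $c_{jk}$ are cofactor polynomials. Thus $\partial_{e_j}$ only lives in $\D(\KK[\bar{X}])_J$. Since $J^2$ (or a suitable power of $J$) is $G$-invariant, this is consistent with Theorem \ref{EFS}, where $\Delta$ is a power of $J$ and $f$ is the discriminant.

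My plan is therefore the following. Interpret $d+h$ inside $\D(\KK[\bar{E}])\subset\D(\KK[\bar{E}]_f)\simeq\D(\KK[\bar{X}])^G_\Delta$ via Theorem \ref{EFS}. This identification is compatible with the actions on $\KK[\bar{E}]\subset\KK[\bar{X}]_\Delta$: an element of $\D(\KK[\bar{X}]_\Delta)^G$ is determined by its restriction to invariants. The image of $d+h$ is therefore a $G$-invariant operator on $\KK[\bar{X}]_\Delta$ of order $1$, namely $D=\sum_k g_k\partial_{x_k}+h$ with $g_k\in\KK[\bar{X}]_\Delta$. It maps $\KK[\bar{E}]$ into $\KK[\bar{E}]$, since $d$ does by construction.

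The step I expect to be hardest is showing $g_k\in\KK[\bar{X}]$. I would argue $g_k=D(x_k)-hx_k$ and use a purity/Hartogs argument: the derivation $D-h$ is $G$-equivariant and preserves $\KK[\bar{E}]$, and $\KK[\bar{X}]$ is integral over $\KK[\bar{E}]$. Apply $D-h$ to the minimal polynomial identity $\prod_{g}(T-g x_k)$, whose coefficients lie in $\KK[\bar{E}]$. This expresses $(D-h)(x_k)$ times $\partial P/\partial T(x_k)$ as a polynomial. Hence poles can only occur along the reflection hyperplanes. Along a hyperplane $H$ one reduces to the rank-one case locally, where invariants are generated by $\alpha_H^{m}$; the derivation preserves $\KK[\bar{X}]^{G_H}$ locally and this forces regularity.

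If this local argument fails, the fallback is to accept that $d+h$ is only in $\Invar_\Delta$. In that case one would instead work with the localized algebra and use Theorem \ref{basic-GK-2}, noting that $\operatorname{ad}\Delta$ is locally nilpotent, to transport modules back.
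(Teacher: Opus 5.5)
Your diagnosis of the obstacle is right, and it is sharper than the paper's own proof, which is just ``an immediate application of Theorem \ref{Knop}''. Knop's criterion picks out the $G$-invariant operators among those already lying in $\D(\KK[\bar X])$. It says nothing about an operator such as $\partial_{e_i}$, which has no polynomial coefficients at all.

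The gap is your regularity step, and it already fails in the rank-one model you want to reduce to. Take $G=\mu_m$ ($m\ge 2$) acting on $\KK[x]$, with $e=x^m$. The derivation $\partial_e=\frac{1}{m x^{m-1}}\partial_x$ maps $\KK[e]$ into itself but has a pole along $x=0$. More generally, $p(e)\partial_e$ has a polynomial lift if and only if $e\mid p(e)$. So preserving the invariants does not force regularity; only tangency to the discriminant does.

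Tangency is exactly what $d$-simplicity forbids, so the step cannot be repaired. Suppose all $g_k\in\KK[\bar X]$, so that $\tilde d=\sum_k g_k\partial_{x_k}$ is a $G$-invariant derivation of $\KK[\bar X]$ restricting to $d$ on $\KK[\bar E]$. Take a reflection $s\in G$ with $s\cdot\alpha_s=\zeta\alpha_s$, $\zeta\ne1$. Then $s\cdot\tilde d(\alpha_s)=\zeta\,\tilde d(\alpha_s)$. Since $s$ fixes $\ker\alpha_s$ pointwise, $\tilde d(\alpha_s)$ vanishes there, i.e. $\alpha_s\mid\tilde d(\alpha_s)$. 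Hence $\alpha_s\KK[\bar X]$ is $\tilde d$-stable, and $\alpha_s\KK[\bar X]\cap\KK[\bar E]$ is a $d$-stable ideal of $\KK[\bar E]$. It is nonzero (it contains $\prod_{g\in G}g\cdot\alpha_s$) and proper, contradicting $d$-simplicity.

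So with $\Invar=\D(\KK[\bar X])^G$, as intended, $d+h\notin\Invar$ for every nontrivial reflection group, and the statement cannot be proved. It holds only, and trivially, under the literal reading $X=\Spec\KK[\bar E]$, where $G$ acts trivially.

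The paper's own example confirms this. There $\partial_{e_1}=\frac{x_2\partial_{x_2}-x_1\partial_{x_1}}{x_2-x_1}$ and $\partial_{e_2}=\frac{\partial_{x_1}-\partial_{x_2}}{x_2-x_1}$, so $d(x_1)=\frac{1-x_1+x_1x_2(x_1+x_2)^2}{x_2-x_1}$. Its numerator restricts to $4t^4-t+1\neq0$ on the diagonal. Equivalently, $d(f)=2e_1-4e_1^2e_2-4\notin f\KK[e_1,e_2]$.

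Your fallback, $\Delta^m(d+h)\in\Invar$ for some $m$, is the true statement, but it is not the lemma. It also does not rescue the commutator argument in Theorem \ref{irred-non-hol-invar-Weyl}. That argument would then only give stability of $L\cap\KK[\bar E]$ under $f^m d$, and $f^m d$ preserves the proper ideal $f\KK[\bar E]$, so no contradiction follows.
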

\begin{proof}
    An immediate application of Theorem \ref{Knop}.
\end{proof}
Our first main theorem is the following one:
\begin{theorem}\label{irred-non-hol-invar-Weyl}
Let $I=\D(X_f)(d+h)$, $J=I\cap \Invar$. Then $\Invar/J$ is an irreducible $\Invar$-module of Gelfand-Kirillov dimension $2n-1$.
\end{theorem}
\begin{proof}
We have the short exact sequence $0 \rightarrow J \rightarrow \Invar \rightarrow \Invar/J \rightarrow 0$. Since $\D(X_f)=\Invar_\Delta$, $\D(X_f)$ is a flat $\Invar$-module. By \cite[4.12]{Lam2}, $J \otimes_{\Invar} \D(X_f)$ is isomorphic to $J \D(X_f)$, which by \cite[Proposition 2.1.16(iii)]{McConnell} is equal to $I$. Hence we have a short exact sequence
\[ 0 \rightarrow I \rightarrow \D(X_f) \rightarrow (\Invar/J )_\Delta \rightarrow 0.\]
The set $S=\{ \Delta^k \}_{k \geq 0}$ is multiplicatively closed. In a ring of differential operators $\D(Y)$ on a smooth affine variety, every element of $\mathcal{O}(Y)$ acts ad-locally nilpotently. Moreover $\D(Y)$ is a domain. Hence we can apply Theorem \ref{basic-GK-2} to conclude that $\GKdim_{\Invar} \Invar / J = \GKdim_{ \D(X_f)} \D(X_f)/I=2n-1$.
It remains to show that $\Invar / J$ is irreducible, or equivalently, that $J$ is a maximal left ideal. $J$ contains the element $(d+h)$ which, as we just saw in the previous Lemma, belongs to $\Invar$. Suppose there is a proper left ideal $L$ strictly containing $J$. As $\D(X_f)/I$ is a simple $\D(X_f)$-module and localization is an exact functor, $LS^{-1}= \D(X_f)/I$. So $L \cap S \neq \emptyset$, and in particular $L \cap \KK[\bar{E}]$ is a nonzero ideal of $\KK[\bar{E}]$. Let $a$ be a nonzero element from this ideal. Then $[d+h,a]=d(a) \in L \cap \KK[\bar{E}]$. So $L \cap \KK[\bar{E}]$ is a nonzero $d$-invariant ideal of $\KK[\bar{E}]$, and hence, as this ring is $d$-simple, $1 \in L$. This contradicts the fact that $L$ is a proper ideal. So $J$ is indeed maximal, and $\Invar/J$ is our desired module.
\end{proof}
\subsection{A concrete example}
\label{concrete_example}
Let $n=2$ and consider $\Invar$ when $G=S_2$ is the symmetric group on two letters. In this case, writing the Weyl algebra generators as $x_1, x_2, \partial_1, \partial_2$, we have, in the notation of Theorem \ref{EFS}, $\Delta=(x_1-x_2)^2$, $e_1= x_1+x_2$, $e_2=x_1x_2$, and $f=e_1^2-4e_2$.
The algebra $\KK[e_1,e_2]$ is $d$-simple for the derivation $d=\partial_{e_1} +(e_2e_1^2+1)\partial_{e_2}$. If we choose $h=e_2$, then, setting $A=\D(\KK[e_1,e_2])^{S_2}$, the module $A/A(d+h)$ is irreducible non-holonomic of Gelfand-Kirillov dimension $3$ by Theorem \ref{irred-non-hol-invar-Weyl}.
\section{Irreducible non-holonomic modules for rational Cherednik algebras}
In this section we will restrict our attention to complex reflection groups $G$ that are finite Coxeter groups. Let $\h$ be the natural representation of $G$ and $R \subset \h^*$ the set of roots. For each root $\alpha \in R$, write $\alpha^\vee$ for the coroot in $\h$, and denote by $s_\alpha \in \GL(\h)$ the corresponding reflection.
\begin{definition}\cite{EG}
Let $\c: R \rightarrow \C$, $\alpha \mapsto \c_\alpha$ be a function invariant under the conjugation action of $G$. The \textit{rational Cherednik algebra} $H_\c$ is the algebra generated by the vector spaces $\h, \h^*$, and the set $G$, subject to the relations
\[ g \cdot x \cdot g^{-1} =g(x), \forall x \in \h^*, g \in G,\]
\[ g \cdot y \cdot g^{-1} = g(y), \forall y \in \h, g \in G,\]
\[ x_1 \cdot x_2 = x_2 \cdot x_1, \, y_1 \cdot y_2 = y_2 \cdot y_1 , \forall x_1,x_2 \in \h^*, y_1, y_2 \in \h, \]
\[ [y,x] = \langle y, x \rangle - \sum_{\alpha \in R / \{ \pm 1\}} \c_\alpha \langle y, \alpha \rangle \langle \alpha^\vee, x \rangle s_\alpha.\]
\end{definition}
When the algebra $H_\c$ is a simple ring, we say that the parameter $\c$ is \textit{regular} (cf. \cite[Theorem 3.2]{Thompson}). When the image of the function $\c$ lies in $\Z$, we say that the parameter is \textit{integral}.
\begin{theorem}\label{thm-Cherednik}
    If the parameter $\c$ is integral, then it is regular, and the rational Cherednik algebra $H_\c$ is Morita equivalent to $\Invar$.
\end{theorem}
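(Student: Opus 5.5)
The argument will rest on the spherical subalgebra and the known results on integral parameters, chiefly Berest--Etingof--Ginzburg and Gordon--Stafford. Let $e=\frac{1}{|G|}\sum_{g\in G} g \in \C G\subset H_\c$ be the trivial idempotent. The plan has three steps:
\begin{enumerate}
\item prove $H_\c e H_\c = H_\c$ for integral $\c$, which makes $H_\c$ Morita equivalent to the spherical subalgebra $eH_\c e$ via $M\mapsto eM$;
\item identify $eH_\c e$ with $eH_0e=e(\Weyl*G)e\simeq\Invar$, using Lemma \ref{most-basic-lemma} with $\KK=\C$;
\item deduce that $H_\c$ is simple, i.e.\ $\c$ is regular, because a ring Morita equivalent to the simple ring $\Invar$ (Corollary \ref{Morita-invariants-Weyl}) is itself simple, and two-sided ideals correspond under Morita equivalence.
\end{enumerate}

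For step (2) I will use the shift functors. For integral $\c$ there are shift isomorphisms $eH_\c e\simeq eH_{\c+1_S}e$, where $1_S$ is the indicator of a conjugacy class $S$ of reflections. These are given by the Heckman--Opdam shift operators, in the form proved by Berest--Etingof--Ginzburg \cite{Berest}. Concretely, through the Dunkl embedding $H_\c\hookrightarrow \D(\h_{reg})*G$, the spherical algebra $eH_\c e$ becomes an algebra of $G$-invariant differential operators on $\h_{reg}$. Conjugation by $\delta^{\c}=\prod_\alpha \alpha^{\c_\alpha}$, which is a genuine rational function when $\c$ is integral, identifies this algebra with $\D(\h)^G=\Invar$. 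This is the description of $eH_\c e$ for integral parameters in \cite{Berest}. Iterating from $\c=0$ gives $eH_\c e\simeq eH_0e$ for every integral $\c$. I will take this identification from the literature rather than reprove it.

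Step (1) is the main obstacle. In general $H_\c eH_\c=H_\c$ holds exactly when no simple $H_\c$-module $M$ satisfies $eM=0$. I would first cite directly the result of Gordon--Stafford, or of Berest--Etingof--Ginzburg, that integral parameters are ``aspherical-free''. Failing that, I would argue in category $\mathcal O$. The two-sided ideal $H_\c eH_\c$ is proper if and only if some simple module in $\mathcal O$ is killed by $e$, because a proper ideal has nonzero associated variety and Losev's and Ginzburg's results reduce the question to $\mathcal O$. One then shows, via the KZ functor, that at integral $\c$ the Hecke algebra specializes to $\C G$ and is semisimple. Hence $\mathcal O$ is semisimple, with simples the standard modules $\Delta(\tau)=\C[\h]\otimes\tau$. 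Each of these has $e\Delta(\tau)\neq 0$, since $(\C[\h]\otimes\tau)^G\neq0$: every irreducible $\tau$ occurs in the coinvariants.

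Once $H_\c eH_\c=H_\c$, Morita equivalence is standard: $H_\c e$ is a progenerator and $\operatorname{End}_{H_\c}(H_\c e)^{op}= eH_\c e\simeq\Invar$. Regularity then follows from step (3).
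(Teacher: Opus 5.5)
\noindent Your step (2) fails: the algebra isomorphism $eH_\c e\simeq eH_0e\simeq\Invar$ that you plan to import is false in general, and conjugation by $\delta^{\c}$ does not produce it.

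Under the Dunkl embedding, $eH_\c e$ contains the radial part of $\sum_i y_i^2$, namely $L_\c=\Delta-\sum_{\alpha>0}\frac{\c_\alpha}{\alpha}\partial_{\alpha^\vee}$ (up to normalization of the roots). Conjugating by $\delta^{\c}$ turns $L_\c$ into the Calogero--Moser operator $\Delta-\sum_{\alpha>0}\c_\alpha(\c_\alpha+1)\frac{(\alpha,\alpha)}{\alpha^2}$. Its potential is not polynomial unless every $\c_\alpha\in\{0,-1\}$, so the conjugated algebra is not $\D(\h)^G$. What conjugation by $\delta$ actually gives is $eH_\c e\simeq e_-H_{\c+1}e_-$, where $e_-$ is the sign idempotent (one conjugacy class of reflections at a time). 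The algebra $e_-H_{\c+1}e_-$ is only Morita equivalent to $eH_{\c+1}e$, not isomorphic to it.

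The failure is already visible for $G=\Z/2$ and $\c=1$. There $eH_\c e$ is generated by $ex^2e$, $ey^2e$ and $e(xy+yx)e$, so it is the primitive quotient $U(\mathfrak{sl}_2)/(\Omega-\lambda_\c)$, with $\lambda_\c$ an affine function of $\c(\c+1)$. By Dixmier these quotients are pairwise non-isomorphic, hence $eH_1e\not\simeq eH_0e\simeq A_1(\C)^{\Z/2}$. This is also not what \cite{Berest} proves. For $\c=m\in\Z_{\ge0}$ they identify $eH_me$ with $\D(Q_m)^G$, the invariant differential operators on the quasi-invariants, and that algebra is Morita equivalent to $\D(\h)^G$, not isomorphic to it.

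Since the theorem only claims a Morita equivalence, that is what you should aim for. The paper does this directly. It takes from \cite{Berest} that $H_\c$ is Morita equivalent to $H_0=\Weyl*G$ for integral $\c$, then composes with Corollary \ref{Morita-invariants-Weyl}; no spherical subalgebra is needed.

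Your own framework can be repaired as follows:
\begin{enumerate}
\item Your step-(1) argument (semisimple $\mathcal O$, every irreducible occurring in the coinvariants) applies equally to $e_-$. So $H_{\c'}e_-H_{\c'}=H_{\c'}=H_{\c'}eH_{\c'}$ for every integral $\c'$.
\item This gives the chain $H_{\c'}\sim e_-H_{\c'}e_-\simeq eH_{\c'-1}e\sim H_{\c'-1}$.
\item Iterating up or down, one conjugacy class of reflections at a time with the corresponding sign-type idempotent, reaches $H_0$.
\end{enumerate}
This route yields only a Morita equivalence, which is all the statement asserts. Steps (1) and (3) of your proposal are fine as they stand.
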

\begin{proof}
    It follows from \cite[Theorem 3.1, Lemma 8.2]{Berest} that $H_\c$ is Morita equivalent to the skew group ring $\Weyl*G$, where $n=\dim \h$. The latter is Morita equivalent to $\Invar$, by Corollary \ref{Morita-invariants-Weyl}.
\end{proof}
When the parameter $\c$ is regular, we define a module over $H_\c$ to be holonomic if its Gelfand-Kirillov dimension equals $n=\dim \h$ (\cite[Proposition 3.7]{Thompson}).
\begin{theorem}\label{iredd-non-hol-Cherednik}
Let $H_\c$ be a rational Cherednik algebra with integral parameter $\c$, and let $n=\dim \h$. Then $H_\c$ has irreducible non-holonomic modules with Gelfand-Kirillov dimension $2n-1$ when $n>1$.
\end{theorem}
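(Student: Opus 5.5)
The plan is to transport the module of Theorem \ref{irred-non-hol-invar-Weyl} across the Morita equivalence of Theorem \ref{thm-Cherednik}, working over $\KK=\C$. Fix $n=\dim\h\geq 2$ and take $G$ acting on $\h$ via its natural (reflection) representation. Then $\Invar$ is the ring of invariants considered in Section 4. That section gives a cyclic $\Invar$-module $N=\Invar/J$ which is irreducible and has $\GKdim\,N=2n-1$. Let $F\colon \Invar\text{-}\Mod\to H_\c\text{-}\Mod$ be the equivalence supplied by Theorem \ref{thm-Cherednik}, and put $M=F(N)$.

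First I check irreducibility. An equivalence of abelian categories preserves the lattice of subobjects, so it sends simple objects to simple objects. Hence $M$ is a simple $H_\c$-module.

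Second I check finite generation. Since $N$ is cyclic, it is finitely generated. A Morita equivalence has the form $F(-)=P\otimes_{\Invar}-$ for a progenerator bimodule $P$, which is finitely generated. Therefore $M$ is finitely generated, and its Gelfand--Kirillov dimension is defined.

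Third I compute the dimension. The equivalence of Theorem \ref{thm-Cherednik} is a composite of two Morita equivalences: $H_\c\sim\Weyl*G$ and $\Weyl*G\sim\Invar$. Applying Theorem \ref{GK-is-Morita-for-modules} to each step preserves $\GKdim$, so $\GKdim_{H_\c}M=\GKdim_{\Invar}N=2n-1$. For this we need the algebras involved to be affine. $H_\c$ is finitely generated by $\h$, $\h^*$ and $G$, $\Weyl*G$ is clearly affine, and $\Invar$ is affine by Theorem \ref{MS}.

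Finally, because $\c$ is integral it is regular by Theorem \ref{thm-Cherednik}, so the notion of holonomic module for $H_\c$ applies. Since $n>1$ we have $2n-1>n$, so $M$ is non-holonomic.

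The main obstacle is the hypotheses of Theorem \ref{GK-is-Morita-for-modules} as quoted from \cite{BH}. I must confirm they cover arbitrary Morita equivalences between affine algebras, and not only equivalences with additional structure such as a filtration or Noetherian assumptions. If some condition is needed, I would check it for $H_\c$ directly: $H_\c$ is Noetherian because its associated graded algebra is $\C[\h\oplus\h^*]*G$ (PBW theorem), and the progenerator can be taken to be $H_\c e$ or $\Weyl e$, which is finitely generated. As a fallback I would use the explicit form $F(N)=P\otimes N$ with $P$ a finitely generated bimodule on both sides; the standard bound $\GKdim(P\otimes N)\leq\GKdim N$ then applies in both directions, giving equality.
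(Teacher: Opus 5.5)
Your proposal is correct and follows the paper's proof exactly. The paper takes the irreducible module $\Invar/J$ of Theorem \ref{irred-non-hol-invar-Weyl}, transports it through the Morita equivalence of Theorem \ref{thm-Cherednik}, and invokes Theorem \ref{GK-is-Morita-for-modules} so that $\GKdim = 2n-1 > n$ is preserved. Your additional checks are details the paper leaves implicit, and they are sound: simplicity is preserved by an equivalence, finite generation follows from the progenerator, the algebras are affine, and the fallback bound $\GKdim(P\otimes N)\le \GKdim N$ holds for a bimodule finitely generated on both sides.
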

\begin{proof}
    $\Invar$ has such modules by Theorem \ref{irred-non-hol-invar-Weyl}. By Theorems \ref{thm-Cherednik} and \ref{GK-is-Morita-for-modules}, we are done.
\end{proof}
We emphasize that, given such an integral parameter $\c$, we can explicitly construct irreducible non-holonomic modules by combining Theorem \ref{irred-non-hol-invar-Weyl} with \cite[Theorem 8.1, Lemma 8.2]{Berest}.
\appendix
\section{Irreducible non-holonomic $\Weyl$-modules with all possible values of $\GKdim$}
We begin by recalling Quillen's celebrated lemma.
\begin{theorem}\cite{Quillen}
    Let $A$ be an associative algebra with a filtration $\mathcal{F}=\{F_i\}_{i\geq 0}$ such that $1\in F_0$ and the associated graded algebra is an affine commutative $\KK$-algebra. Then for every irreducible $A$-module $M$, $\operatorname{End} \, M$ is an algebraic extension of $\KK$. Hence, if $\KK$ is algebraically closed, it equals $\KK$.
\end{theorem}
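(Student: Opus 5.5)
The plan is to argue by contradiction: an element $\varphi\in\operatorname{End} M$ that is transcendental over $\KK$ would make $M$ a vector space over the field $\KK(\varphi)$. Generic flatness applied to a suitable associated graded module will show that this is impossible. Let $D=\operatorname{End}_A M$. Since $M$ is irreducible, Schur's lemma makes $D$ a division algebra. Fix $\varphi\in D$. It suffices to show that $\varphi$ is algebraic over $\KK$. Then $\KK[\varphi]$ is a finite field extension of $\KK$, so $D$ is algebraic over $\KK$. If $\KK$ is algebraically closed, this forces $\varphi\in\KK$, giving $D=\KK$.

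Suppose instead that $\varphi$ is transcendental. Then $\KK[t]\to D$, $t\mapsto\varphi$, is injective, and since $D$ is a division algebra it extends to $\KK(t)\hookrightarrow D$. So every nonzero polynomial $p(\varphi)$ acts invertibly on $M$. Consider the algebra $B=A\otimes_\KK \KK[t]$, with $t$ acting on $M$ by $\varphi$. This is an action because $\varphi$ commutes with $A$. Filter $B$ by $F_iB=F_iA\otimes\KK[t]$, so that $\operatorname{gr}B=\operatorname{gr}A\otimes\KK[t]$ is an affine commutative $\KK[t]$-algebra. Since $M$ is irreducible, it is cyclic, say $M=Am_0$. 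Filter $M$ by $F_iM=F_iA\cdot\KK[\varphi]m_0$. This is a good filtration for $B$. Hence $\operatorname{gr}M$ is a finitely generated module over $\operatorname{gr}A\otimes\KK[t]$, because it is generated by the image of $m_0$.

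The key step is generic freeness (Grothendieck's generic flatness lemma in its freeness form). It says that there is a nonzero $s\in\KK[t]$ such that $(\operatorname{gr}M)_s$ is a free $\KK[t]_s$-module. Localization is exact, so each quotient $(F_iM)_s/(F_{i-1}M)_s$ is free over $\KK[t]_s$, being a graded summand of $(\operatorname{gr}M)_s$. Each short exact sequence
\[0\to (F_{i-1}M)_s\to(F_iM)_s\to(F_iM/F_{i-1}M)_s\to0\]
therefore splits. By induction, each $(F_iM)_s$ is free, with a basis extending a basis of $(F_{i-1}M)_s$. Taking the union over $i$, we conclude that $M_s$ is a nonzero free $\KK[t]_s$-module. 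This bookkeeping, namely choosing the filtration on $B$ and $M$ so that $\KK[t]$ is in degree $0$ and generic freeness applies to $\operatorname{gr}M$, is where I expect the main care to be needed.

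Finally, since $s(\varphi)$ is invertible on $M$, we have $M_s=M$, and $M$ is a $\KK(t)$-vector space. In particular, multiplication by any nonzero $p\in\KK[t]$ is surjective on $M$. Now choose an irreducible $p\in\KK[t]$ not dividing $s$. Such a $p$ exists because $\KK[t]$ has infinitely many primes. Then $p$ is not a unit in $\KK[t]_s$, so $p\cdot\KK[t]_s\neq\KK[t]_s$. Consequently $p$ does not act surjectively on a nonzero free $\KK[t]_s$-module. This contradiction shows that $\varphi$ is algebraic, which completes the plan.
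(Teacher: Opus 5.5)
Your proof is correct and is essentially Quillen's original generic-flatness argument: generic freeness of $\operatorname{gr}M$ over $\KK[t]_s$ is lifted through the filtration to make $M=M_s$ a nonzero free $\KK[t]_s$-module, which contradicts $M$ being a $\KK(t)$-vector space; the paper gives no proof of its own and only cites \cite{Quillen}. The one step worth an extra word is freeness of each $(\operatorname{gr}_iM)_s$: as a direct summand of a free module it is projective, and it is free because $\KK[t]_s$ is a PID, which is what your splitting and final surjectivity argument use.
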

This is the case for the Weyl algebra with the Bernstein filtration \cite[Theorem 7.3.1]{Coutinho}.
From now on we assume $\KK$ is algebraically closed of characteristic $0$.
\begin{corollary}\label{appb-corol-1}
    For every irreducible $\Weyl$-module $M$, $\operatorname{End} \, M=\KK$.
\end{corollary}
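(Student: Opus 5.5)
The plan is to deduce the corollary directly from Quillen's lemma stated just above. First, equip $\Weyl$ with the Bernstein filtration $\{B_i\}_{i\ge 0}$, where $B_i$ is spanned by the monomials $x^\alpha\partial^\beta$ with $|\alpha|+|\beta|\le i$. Then $1\in B_0=\KK$. The relations $[\partial_i,x_j]=\delta_{ij}$ lower total degree by $2$, so the commutators vanish in the associated graded algebra. Hence $\operatorname{gr}\Weyl\cong\KK[x_1,\ldots,x_n,\xi_1,\ldots,\xi_n]$, a polynomial ring in $2n$ variables, which is an affine commutative $\KK$-algebra. This is exactly the fact cited from \cite[Theorem 7.3.1]{Coutinho}, and I would simply invoke it.

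Second, let $M$ be an irreducible $\Weyl$-module. By Quillen's lemma, $\operatorname{End}_{\Weyl} M$ is algebraic over $\KK$. By Schur's lemma it is also a division ring $D$. For any $\phi\in D$, the subring $\KK[\phi]$ is commutative. Since $\phi$ is algebraic, $\KK[\phi]$ is finite-dimensional over $\KK$, and it has no zero divisors because it sits inside a division ring. It is therefore a finite field extension of $\KK$. As $\KK$ is algebraically closed, $\KK[\phi]=\KK$, so $\phi\in\KK$. This gives $\operatorname{End} M=\KK$.

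There is no real obstacle here. The only point needing care is checking that the hypotheses of Quillen's lemma hold for the Bernstein filtration, namely that $\operatorname{gr}\Weyl$ is commutative and finitely generated. This follows from the degree count on the defining relations above.
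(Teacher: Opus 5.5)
Your proposal is correct and follows the paper's route. The paper proves the corollary as an immediate consequence of Quillen's lemma for the Bernstein filtration; your Schur-lemma argument, that an algebraic element of a division ring over the algebraically closed field $\KK$ lies in $\KK$, just spells out the ``hence it equals $\KK$'' clause that the paper builds into its statement of Quillen's lemma.
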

\begin{proof}
    Immediate consequence of Quillen's Lemma.
\end{proof}
Consider $A_n(\KK)$ and $A_m(\KK)$, and let $M$ be an $A_n(\KK)$-module and $N$ an $A_m(\KK)$-module. We have $A_n(\k) \otimes A_m(\k) \simeq A_{n+m}(\k)$, and $M \otimes N$ naturally carries the structure of an $A_{n+m}(\k)$-module (\cite[Chapter 13]{Coutinho}).
Consider $\Weyl$. We have irreducible holonomic modules, such as the polynomial module, and irreducible modules of Gelfand-Kirillov dimension $2n-1$ 
\cite{Stafford}. Let $j$ be a number in $[n+1,2n-2]$. We want to show that there exists an irreducible $\Weyl$-module $M$ with $\GKdim \, M=j$. Let $r=j-n+1$ and $\ell=2n-j-1$. We have an irreducible module $M$ for $A_r(\k)$ with Gelfand-Kirillov dimension $2r-1$, and a holonomic module $N$ for $A_\ell(\k)$. By \cite[Theorem 1]{Bavula1}, $M \otimes N$ is an irreducible $\Weyl$-module, and by \cite[Theorem 13.4.1(1)]{Coutinho}, $\GKdim \, M \otimes N=2r-1+\ell=2j-2n+2-1+2n-j-1=j$.
Hence
\begin{theorem}\label{Olivier}
    For the Weyl algebra $\Weyl$ and any $j \in [n,2n-1]$, there exists an irreducible $\Weyl$-module $M$ with $\GKdim \, M=j$.
\end{theorem}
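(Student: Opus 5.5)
The plan is a case split on $j$, with a tensor product construction filling the middle range. For $j=n$ we take the polynomial module $\KK[x_1,\ldots,x_n]$. It is irreducible, since any nonzero submodule contains a nonzero polynomial, applying suitable $\partial_i$ reduces it to a nonzero constant, and then multiplication by the $x_i$ recovers everything. Its Gelfand-Kirillov dimension is $n$ by the earlier example. For $j=2n-1$ we take Stafford's module from \cite{Stafford}, or equivalently $\Weyl/\Weyl(d+h)$ from Theorem \ref{Stafford-generalized}, which is irreducible of Gelfand-Kirillov dimension $2n-1$.

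For $j\in[n+1,2n-2]$ (which requires $n\geq 3$) we use the construction sketched just before the statement. Set $r=j-n+1\geq 2$ and $\ell=2n-j-1\geq 1$, so that $r+\ell=n$. By the case $j=2n-1$ applied to $A_r(\KK)$, there is an irreducible $A_r(\KK)$-module $M$ with $\GKdim M=2r-1$. Note that $r\geq 2$, and Stafford's construction (Theorem \ref{Stafford-generalized}) needs at least two variables. Let $N=\KK[y_1,\ldots,y_\ell]$ be the irreducible holonomic polynomial module over $A_\ell(\KK)$. Via $A_r(\KK)\otimes A_\ell(\KK)\simeq A_n(\KK)$, the space $M\otimes N$ is an $\Weyl$-module.

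The key step is irreducibility of $M\otimes N$. We invoke \cite[Theorem 1]{Bavula1}; the essential input is that $\operatorname{End}M=\KK$ by Corollary \ref{appb-corol-1}. If we wanted to argue directly, we would use the following density-type argument. Take a nonzero submodule $P$ and an element $\sum_k m_k\otimes n_k\in P$ with the $m_k$ linearly independent and as few terms as possible. Use the Jacobson density theorem for the simple module $M$, whose endomorphism ring is $\KK$, to find $a\in A_r(\KK)$ with $am_1=m_1$ and $am_k=0$ for $k>1$. This produces a pure tensor $m\otimes n'$ in $P$. Simplicity of $N$ then gives $m\otimes N\subset P$, and simplicity of $M$ gives $M\otimes N\subset P$.

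It remains to compute the dimension: $\GKdim(M\otimes N)=\GKdim M+\GKdim N$. This holds by \cite[Theorem 13.4.1(1)]{Coutinho}, which applies because $N$ is holonomic and the polynomial module has a genuinely polynomial growth function, so the $\limsup$ subtleties of Proposition \ref{growth}(4) disappear. The result is $(2r-1)+\ell=j$. I expect the main obstacle to be this additivity of Gelfand-Kirillov dimension, which can fail for general tensor products, together with the irreducibility step. Both are handled by the cited results, and Quillen's lemma in the form of Corollary \ref{appb-corol-1} is exactly the hypothesis that makes the irreducibility argument work.
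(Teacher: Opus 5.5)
Your proposal is correct and follows the paper's argument: the same choice $r=j-n+1$, $\ell=2n-j-1$ with $M$ a Stafford-type module over $A_r(\KK)$ and $N$ holonomic over $A_\ell(\KK)$, irreducibility of $M\otimes N$ via \cite[Theorem 1]{Bavula1} (your Jacobson density argument using $\operatorname{End} M=\KK$ is a correct unpacking of it), and $\GKdim(M\otimes N)=(2r-1)+\ell=j$ via \cite[Theorem 13.4.1(1)]{Coutinho}. One small correction to your justification: that additivity holds for all finitely generated modules over Weyl algebras, because Hilbert functions of good filtrations are eventually polynomial, so holonomicity of $N$ is not what makes the cited result apply.
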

We finish this paper with an open problem.
\begin{problem}
Let $\k$ be an algebraically closed field of zero characteristic and $G$ any finite group of automorphisms of $\Weyl$. Do there exist irreducible $\Invar$-modules with any value of Gelfand-Kirillov dimension in the interval $[n,2n-1]$?\footnote{Since $\Invar$ is a somewhat commutative algebra, the Gelfand-Kirillov dimension of an irreducible module is always an integer (cf. \cite[Section 8.6]{McConnell}).}
\end{problem}
\section*{Acknowledgements}
The authors would like to thank F. Eshmatov, T. Stafford, E. Zelmanov, and O. Mathieu for discussions about this paper. In particular, we thank O. Mathieu for allowing us to include his proof of Theorem \ref{Olivier}. The second-named author would also like to thank D. Levicovitz for his interest in the problems that led to the creation of this manuscript. FAS was supported by FAPESP grant 2024/14914-9.


\begin{thebibliography}{9}
\bibitem[Arc81]{Archer} J. Archer. Derivations on commutative rings and projective modules over skew polynomial
rings, PhD thesis, Leeds University 1981.
\bibitem[Bav95]{Bavula1} {\sc V. Bavula},
{\em Each Schurian algebra is tensor-simple}. Comm. Algebra 23 (1995), no. 4, 1363–1367.
\bibitem[Bav96]{Bavula2}
{\sc V. Bavula},
{\em Filter dimension of algebras and modules, a simplicity criterion for generalized Weyl algebras},
Comm. Algebra {\bf 24}(6) (1996) 1971--1992.
\bibitem[BH08]{BH}
{\sc V. Bavula and V. Hinchcliffe},
{\em Morita invariance of the filter dimension and of the inequality of Bernstein},
Algebr. Represent. Theory {\bf 11}(5) (2008) 497–504.
\bibitem[BET24]{Bellamy}
{\sc G. Bellamy, P. Etingof, and D. Thompson},
{\em Pull-Back and Push-Forward Functors for Holonomic Modules over Cherednik Algebras}.\\
\href{https://arxiv.org/abs/2402.18210}{arXiv:2402.18210} [math.QA]
\bibitem[BEG03]{Berest}
{\sc Y. Berest, P. Etingof, and V. Ginzburg}, {\em Cherednik algebras and differential operators on quasi-invariants},
Duke Math. J. {\bf 118}(2) (2003) 279--337.
\bibitem[Ber71]{INBernstein}
{\sc I. N. Bernstein},
{\em The analytic continuation of generalized functions with respect to a parameter},
Funct. Anal. Appl 6 (1972) 273--285.
\bibitem[BL88]{BerL} J. Bernstein and V. Lunts. On nonholonomic irreducible D-modules. Invent. Math., 94(2):223–243, 1988.
\bibitem[B79]{Bjork}
{\sc J. E. Bj\"ork},
{\em Rings of differential operators},
North-Holland Math. Library, 21,
North-Holland Publishing Co., Amsterdam-New York, 1979.
\bibitem[Cou95]{Coutinho}
{\sc S. C. Coutinho},
{\em A Primer of Algebraic $D$-Modules},
London Mathematical Society Student Texts, vol. 33, Cambridge University Press, 1995.
\bibitem[Cou99]{Coutinho0}
{\sc S. C. Coutinho},
{\em $d$-Simple rings and simple $\mathscr{D}$-modules},
Math. Proc. Cambridge Philos. Soc. {\bf 125}(3) (1999) 405--415.
\bibitem[Cou03]{Coutinho03}
{\sc S. C. Coutinho}, {\em Non-holonomic simple D-modules over complete intersections}, Proc. Amer. Math. Soc. 131 (2003), no. 1, 83–86.
\bibitem[Cou06]{Coutinho06}
{\sc S. C. Coutinho}, {\em Nonholonomic simple D-modules over projective varieties}, Arch. Math. (Basel) 86 (2006), no. 6, 540–545.
\bibitem[Cou07]{Coutinho07}
{\sc S. C. Coutinho}, {\em Nonholonomic simple D-modules from simple derivations}, Glasg. Math. J. 49 (2007), no. 1, 11–21.
\bibitem[Cou17]{Coutinho17}
{\sc S. C. Coutinho}, {\em Nonholonomic modules over enveloping algebras of semisimple Lie algebras},
J. Algebra 434 (2015), 153–168.
\bibitem[Dum06]{Dumas}
{\sc F. Dumas},
{\em An introduction to noncommutative polynomial invariants}. Lecture notes from the conference ``Homological methods and representations of non-commutative algebras'', Mar del Plata, Argentina March 6--17, 2006.\\
\href{https://lmbp.uca.fr/$\sim$fdumas/recherche.html}{https://lmbp.uca.fr/$\sim$fdumas/recherche.html}
\bibitem[EFOS17]{EFOS}
{\sc F. Eshmatov, V. Futorny, S. Ovsienko, and J. Schwarz},
{\em Noncommutative Noether’s problem
for complex reflection groups},
Proc. Amer. Math. Soc. {\bf 145}(12) (2017) 5043--5052.
\bibitem[EG02]{EG}
{\sc P. Etingof and V. Ginzburg},
{\em Symplectic reflection algebras, Calogero-Moser space, and deformed Harish-Chandra homomorphism},
Invent. Math. {\bf 147}(2) (2002) 243--348.
\bibitem[FS20a]{FS}
{\sc V. Futorny and J. Schwarz},
{\em Noncommutative Noether's problem vs classic Noether's problem},
Math. Z. {\bf 295}(3--4) (2020) 1323--1335.
\bibitem[FS20b]{FS3}
{\sc V. Futorny and J. Schwarz},
{\em Algebras of invariant differential operators},
J. Algebra {\bf 542} (2020) 215--229.
\bibitem[FS21]{FS4}
{\sc V. Futorny and J. Schwarz},
{\em Holonomic modules for rings of invariant differential operators},
Internat. J. Algebra Comput. {\bf 31}(4) (2021) 605--622.
\bibitem[G23]{Gaddis}
{\sc J. Gaddis},
{\em The Weyl algebra and its friends: a survey}, arXiv:2305.01609[math.RA].
\bibitem[GGOR03]{GGOR} {\sc V. Ginzburg, N. Guay, E. Opdam, R. Rouquier},
{\em On the category O for rational Cherednik algebras},
Invent. Math. 154 (2003), no. 3, 617–651.
\bibitem[HTT08]{Hotta}
{\sc R. Hotta, K. Takeuchi, T. Tanisaki},
{\em $D$-modules, perverse sheaves, and representation theory},
Progr. Math., vol. 236, Birkhäuser Boston, Inc., Boston, MA, 2008.
\bibitem[Jan83]{Jantzen}
{\sc J. C. Jantzen},
{\em Einhüllende Algebren halbeinfacher Lie-Algebren},
Einhüllende Algebren halbeinfacher Lie-Algebren,
Ergeb. Math. Grenzgeb. vol. 3, Springer-Verlag, Berlin, 1983.
\bibitem[Kno06]{Knop} {\sc F. Knop }, {\em Graded cofinite rings of differential operators}, Michigan Math. J. 54 (2006), no. 1, 3–23.
\bibitem[Kan01]{Kane} {\sc R. M. Kane},  {\em Reflection groups and invariant theory}, Vol. 5. New York: Springer, 2001.
\bibitem[KL00]{KL}
{\sc G. R. Krause and T. H. Lenegan},
{\em Growth of Algebras and Gelfand-Kirillov Dimension},
Grad. Stud. Math., vol. 22, American Mathematical Society, Providence, RI, 2000.
\bibitem[Lam99]{Lam2}
{\sc T. Y. Lam},
{\em Lectures on modules and rings}
Grad. Texts in Math., vol. 189, Springer-Verlag, New York, 1999.
\bibitem[Lor88]{Lorenz0}
{\sc M. Lorenz},
{\em Gelfand-Kirillov dimension and Poincaré series}, Cuadernos de Algebra, vol. 7, Universidad de Granada, Granada, España, 1988.
\bibitem[Los17]{Losev}
{\sc I. Losev},
{\em Bernstein inequality and holonomic modules},
Adv. Math. {\bf 308} 941--963, 2017.
\bibitem[Lun89]{Lunts}
{\sc V. Lunts},
{\em Algebraic varieties preserved by generic flows},
Duke Math. J. {\bf 58}(3) (1989) 531--554.
\bibitem[MR01]{McConnell}
{\sc J. C. McConnell and J. C. Robson}
{\em Noncommutative Noetherian rings},
Grad. Stud. Math., vol. 30,
American Mathematical Society, Providence, RI, 2001.
\bibitem[Mc82]{McConnell3}
{\sc J. C. McConnell},
{\em Representations of solvable Lie algebras V: On the Gel'fand-Kirillov dimension of simple modules},
J. Algebra {\bf 76}(2) (1982) 489--493.
\bibitem[Mon80]{Montgomery}
{\sc S. Montgomery},
{\em Fixed rings of finite automorphism groups of associative rings},
Lecture Notes in Math., vol. 818, Springer, Berlin, 1980.
\bibitem[MS89]{MStafford}
{\sc J. C. McConnell and J. T. Stafford},
{\em Gel'fand-Kirillov dimension and associated graded modules},
J. Algebra {\bf 125}(1) (1989) 197--214.
\bibitem[MS81]{MS}
{\sc S. Montgomery, L. W. Small},
{\em Fixed rings of noetherian rings},
Bull. London. Math. Soc. {\bf 13}(1) (1981) 33--38.
\bibitem[Qui69]{Quillen}
{\sc D. Quillen},
{\em On the endomorphism ring of a simple module over an enveloping algebra},
Proc. Amer. Math. Soc. {\bf 21} (1969) 171--172.
\bibitem[SKK73]{Sato} M. Sato, T. Kawai, and M. Kashiwara. Microfunctions and pseudo-differential equations. In Hyperfunctions and
pseudo-differential equations (Proc. Conf., Katata, 1971; dedicated to the memory of Andr´e Martineau), volume
Vol. 287 of Lecture Notes in Math., pages 265–529. Springer, Berlin-New York, 1973.
\bibitem[Sta85]{Stafford}
{\sc J. T. Stafford},
{\em Non-holonomic modules over Weyl algebras and enveloping algebras},
Invent. Math. {\bf 79}(3) (1985) 619--638.
\bibitem[Tho18]{Thompson}
{\sc D. Thompson},
{\em Holonomic modules over Cherednik algebras, I.},
J. Algebra 493 (2018) 150--170.
\end{thebibliography}
\end{document}